\newtheorem{prop}{Proposition}
\newcommand{\bE}{\mathbb{E}}
\newcommand{\bP}{\mathbb{P}}
\newcommand{\dd}{\mathrm{d}}
\def\sref#1{section~\ref{#1}}
\def\Fref#1{Figure~\ref{#1}}
\title{Sequential noise-induced escapes for oscillatory network dynamics}%
\author{Jennifer Creaser%
  \thanks{Department of Mathematics, University of Exeter, Exeter, EX4 4QF, UK  and EPSRC Centre for Predictive Modelling in Healthcare, University of Exeter, Exeter, EX4 4QJ, UK (\tt{j.creaser@exeter.ac.uk}, \tt{p.ashwin@exeter.ac.uk}).}%
  \and
  Krasimira Tsaneva-Atanasova%
  \thanks{Department of Mathematics and Living Systems Institute, University of Exeter, Exeter, EX4 4QF, UK and EPSRC Centre for Predictive Modelling in Healthcare, University of Exeter, Exeter, EX4 4QJ, UK ( \tt{k.tsaneva-atanasova@exeter.ac.uk}).}%
  \and
  Peter Ashwin%
  \footnotemark[1]
}
\begin{document}
\maketitle


\begin{abstract}
It is well known that the addition of noise in a multistable system can induce random transitions between stable states. The rate of transition can be characterised in terms of the noise-free system's dynamics and the added noise: for potential systems in the presence of asymptotically low noise the well-known Kramers' escape time gives an expression for the mean escape time.
This paper examines some general properties and examples of transitions between local steady and oscillatory attractors within networks: the transition rates at each node may be affected by the dynamics at other nodes. We use first passage time theory to explain some properties of scalings noted in the literature for an idealised model of initiation of epileptic seizures in small systems of coupled bistable systems with both steady and oscillatory attractors. 
We focus on the case of {\em sequential escapes} where a steady attractor is only marginally stable but all nodes start in this state. As the nodes escape to the oscillatory regime, we assume that the transitions back are very infrequent in comparison. We quantify and characterise the resulting sequences of noise-induced escapes. For weak enough coupling we show that a master equation approach gives a good quantitative understanding of sequential escapes, but for strong coupling this description breaks down.
\end{abstract}

\noindent {\bf Keywords:} Noise-induced escape, mean first passage time, network dynamics, cascading failure, epilepsy.


\section{Introduction}

The behaviour of dynamical systems on complex networks has been studied from a variety of viewpoints over the past 30 years, and a variety of tools have been developed to understand cooperative and competitive processes on the network. This is true in many application areas, but particularly in the area of neuroscience.  Oscillatory network models in this area are inspired by oscillatory behaviour present in scales from whole brain regions made of neuronal populations down to single cells, and the networks are fundamental to the organisation of neural systems. For example, a recent review~\cite{AshComNic16} discussed oscillatory models in neuroscience. In the context of pathological states or disease associated brain dynamics, epilepsy serves as a classical example of the importance of oscillatory network dynamics linked to the generation and propagation of epileptiform activity, as it has been discussed in a recent review~\cite{wendling16}. The particular model we study here, given in~\cite{benj12pheno}, is based on the work of~\cite{kalitzin10}, as a network model of epileptic seizures. However, here we are concerned more with the abstract problem of the spreading of noise-induced escapes throughout an oscillatory network. 

In a recent paper \cite{act17domino} we consider {\em sequential noise-induced escapes} for a network of systems where there is escape from a ``shallow" equilibrium attractor to a ``deep" equilibrium attractor at each node. We develop several ideas from that paper to the case where there is bi-stability between steady and oscillatory attractors. As in \cite{act17domino}, each node when uncoupled has two attractors, a stable steady state (that can be destabilised by noise) and a more deeply stable oscillatory attractor (that is more resistant to noise). Starting with the system in the steady attractor, we say it ``escapes" when it crosses a threshold to the basin of the oscillatory attractor. Related work includes, for example, the study of Benayoun et.al.~\cite{ben10ava} who consider a spreading of noise-induced activity in a network of excitatory and inhibitory neurones. More generally sequential transitions between stable/unstable attractors have been implicated in a diverse range of brain functions associated with neuronal timing, coding, integration as well as coordination and coherence \cite{rabinovich08,rabinovich11}.

The time of escape is a random variable that reflects the details of the nonlinear dynamics and the properties of the noise process. In the uncoupled case and for a memoryless escape process, the escapes will be uncorrelated and one can consider independent processes - there will be a random sequence of escapes corresponding to the order in which the nodes escape. More precisely, suppose that a number of bistable dynamical systems each have a ``quiescent'' attractor and an ``active'' attractor, such that in the presence of low amplitude noise there are noise-induced transitions from ``quiescent'' to ``active'' state (that we call ``escape" of the system) but not vice versa. Coupling of such systems can promote (or decrease) escape of others on the network.  There may be critical values of the coupling, as identified in \cite{act17domino,BFG2007a,BFG2007b}, at which the qualitative nature of the escape changes associated with bifurcations on basin boundaries of the attractors where typical transitions occur. In this sense one can see the sequences of escapes, and their relative timings and probabilities, as emergent properties of the network. 

Throughout this paper we link our work to the Eyring-Kramers escape time~\cite{eyring35activ,kramers40brown} between potential minima. In the classical one-dimensional case the expected escape time $T$ from a local (quadratic) minimum $x$ of a potential $V,$ over the unique local (quadratic) maximum $z$ is given by
\begin{equation}
T \simeq \frac{2\pi}{\sqrt{V''(x)|V''(z)|}}\text{e}^{[V(z) - V(x)]/\varepsilon}.
\label{eq:kramers1D}
\end{equation}
We also make use of the multidimensional analogue of \eqref{eq:kramers1D} that assumes minima are separated by a unique saddle~\cite{eyring35activ,kramers40brown}. The first proof of the multidimensional Erying-Kramers' Law, including a definition of $\simeq$, was given in \cite{bovier04meta} using, among other things, potential theory. We also use an analysis based on \cite{berg08EK} that gives generalised Kramers' scalings near a pitchfork bifurcation on the basin boundary of the local minimum from which escape occurs.

In this paper we examine in Section~\ref{sec:1node} the behaviour of a single phenomenological node considered in \cite{benj12pheno} that has bistability between steady and oscillatory attractors. We study in detail the noise-induced escapes from steady to oscillatory attractors and characterise a condition such that the escape from the steady attractor occurs more frequently than escape from the oscillatory attractor in the limit of low noise. We use standard mean escape-time theory to obtain closed form expressions for the mean escape time from steady state, and verify upper and lower bounds in terms of the problem parameters, thus improving the asymptotic estimates presented in \cite{benj12pheno}. 

In Section~\ref{sec:2node} we consider two coupled identical bistable nodes of the form discussed in Section~\ref{sec:1node}. For the cases of a pair of bidirectional, unidirectional and uncoupled nodes we are able to use potential theory analysis of the stochastically forced coupled system to explain the scalings of mean escape times as a function of coupling strength, as observed numerically by \cite{benj12pheno}. In particular for strong bidirectional coupling we find (somewhat counterintuitively) that the mean escape time for one node is greatly increased by the coupling, but the mean escape time of the second node is greatly reduced: this completes the work presented in \cite{benj12pheno} that concerns only the first escape of one of the nodes. As previously discussed in a symmetric context \cite{BFG2007a,BFG2007b} this behaviour is due to the presence of bifurcations in the basin boundary of the steady attractor that correspond to synchronisation, though here the phase dynamics of the coupled oscillations adds an extra complication.

We extend this to some simple networks in Section~\ref{sec:master}. In this context we introduce a master equation approach to the problem of sequential escapes in such a bistable network. For weakly bidirectionally coupled networks of identical bistable nodes, this approach gives a good abstract model representation for the sequential escape process as long as the coupling is sufficiently weak. For the system considered, this description breaks down via a bifurcation process that occurs when the coupling strength reaches a critical value.  Finally, we briefly discuss some open problems and extensions of this work in Section~\ref{sec:discuss}.

\section{Single node escape times}
\label{sec:1node}

The phenomenological network model for seizure onset studied in~\cite{benj12pheno} considers idealised nodes that can be stable in either steady or oscillatory behaviours.
This is probably the simplest planar system that gives coexistence of steady and oscillatory attractors. In \cite{benj12pheno,kalitzin10} the motivation was to regard this as a representation of the brain activity measured by electroencephalogram (EEG) that may be in a healthy (non-oscillating) or seizure (oscillating) state.
 We consider the complex valued noise-driven system 
\begin{equation}
	\dd z(t) = f(z)\dd t +\alpha \dd\, W(t)
	\label{eq:ben1}
\end{equation}
where $W(t)=u+i v$ is a complex Wiener process ($u$ and $v$ are real independent Wiener processes) with noise amplitude $\alpha>0$, and
\begin{equation}
	f(z) = (\nu +i\omega)z +2z|z|^2 - z|z|^4
	\label{eq:benf}
\end{equation}
can be thought of as noise-driven truncated normal form of a Bautin bifurcation:
\begin{equation}
	\dot{z}=f(z).
	\label{eq:bennonoise}
\end{equation} 
For $\nu<0$ the only attractor of (\ref{eq:bennonoise}) is a stable periodic orbit surrounding an unstable equilibrium at the origin. This equilibrium becomes stable in a subcritical Hopf bifurcation at $\nu=0$ and in the regime $0<\nu<1$ the system exhibits bistability with an attracting fixed point and an attracting limit cycle separated by an unstable limit cycle: the stable and unstable periodic orbits meet each other in a saddle node bifurcation at  $\nu=1$. The parameter $\omega$ controls the frequency of the oscillations and here we fix $\omega=20$ as in~\cite{benj12pheno}. Figure~\ref{fig:1nodedyn} summarises the dynamics of \eqref{eq:ben1}, where one realisation of \eqref{eq:ben1} for $\alpha=0.2$ and $\nu=0.5$ is shown in the phase space of \eqref{eq:benf} in panel (a). The time series of the realisation is shown in panel (b). Panel (c) summarises the bifurcation diagram of~\eqref{eq:ben1}.

\begin{figure}[t]
\includegraphics[width=\textwidth]{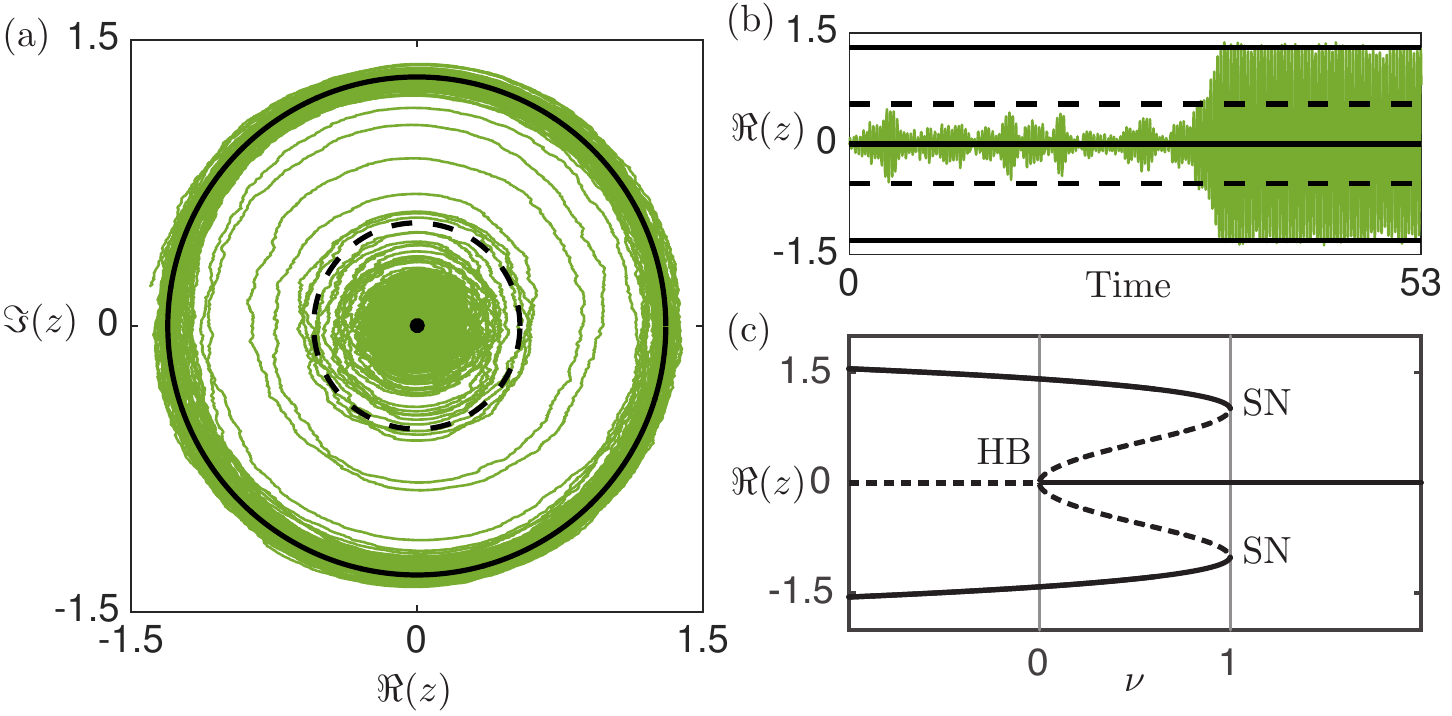}
\caption{The single node noise-driven dynamics of \eqref{eq:ben1}. 
Panels (a) and (b) show one realisation in green for $\nu=0.5$, $\alpha=0.2$ and $\omega=20$  in the phase space of \eqref{eq:benf} containing a stable equilibrium at the origin, an unstable limit cycle (dotted line) and stable limit cycle (solid line).
The bifurcation diagram of \eqref{eq:benf} is shown in panel (c), with the the Hopf  (HB) and saddle-node (SN) bifurcations marked.
The dashed lines in panel (c) indicate the unstable equilibrium at the origin for $\nu<0$ and the unstable limit cycle for $0<\nu<1$. We show in Section~2 that for $\nu<3/4$ the limit cycle is more stable than the equilibrium, in the sense that the potential for the radial dynamics is lower.
}
\label{fig:1nodedyn}
\end{figure}

In the presence of noise of amplitude $\alpha>0$, both steady and oscillatory attractors of (\ref{eq:ben1})  show stochastic fluctuations and there are occasional transitions between these two metastable states, driven by occasional large fluctuations in the noise. For low noise amplitude (\ref{eq:ben1}) shows similar behaviour to the underlying deterministic system, whereas for large noise the dynamics are dominated by large stochastic fluctuations. 

The dynamics' realisation shown in Figure~\ref{fig:1nodedyn} is computed in {\sc{Matlab}} using the Heun method for stochastic differential equations~\cite{kloeden03num} with the initial condition at the origin and step size $h=10^{-5}$. The realisation trajectory spends some time near the origin but the stochastic fluctuations eventually drive it past the basin boundary represented by the unstable limit cycle and it is then attracted to the stable limit cycle. The time series shows this transition from small, noise dominated fluctuations, to an oscillatory regime.

In the presence of noise, we define, analogously to \cite{act17domino}, the escape time of the node $\tau$ to be when the realisation trajectory switches from being close to the origin (quiescent) to being close to the stable limit cycle (active). More precisely, if the noise-free system has stable limit cycles at $|z|=0$ and $|z|=R_{\max}$ separated by an unstable limit cycle at $0<|z|=R_c<R_{\max}$, then the escape time for a given threshold $\xi\in(R_c,R_{\max})$  is
$$
\tau = \inf\{t>0:|z(t)|>\xi~\mbox{given}~z(0)=0\}.
$$
Note that $\tau$ is a random variable that depends on the noise realisation and reflects the influence of the noise on the nonlinear dynamics. For small enough noise the escape time has a cumulative distribution $Q(t) = \bP \{ \tau<t\}$ with an exponential tail \cite{berg13kramers} and the mean escape time $T$ from the steady to the oscillatory attractor is
\begin{equation}
T=\bE(\tau) = \int_{t=0}^{\infty} t \frac{d}{dt}Q(t)\,dt= \int_{t=0}^{\infty} [1-Q(t)]\,dt.
\label{eq:meanesc}
\end{equation}
This is what we aim to quantify in the following section.

\subsection{Mean escape times for a single node}
\label{sec:1nodemean}

To determine the mean escape time, we transform~\eqref{eq:ben1} into polar coordinates given by $z(t)=R(t)\exp[\imath \theta(t)]$ with $R(t)\geq 0$ and $\theta(t)$ considered modulo $2\pi$. This gives
\begin{align}
	\dd R & = \Bigg{[} - \nu R + 2R^3 - R^5 + \frac{\alpha^2}{2R} \Bigg{]} \dd t +\alpha \dd W_R \label{eq:benr}\\ 
	\dd \theta &= \omega \dd t + \frac{\alpha}{R} \dd W_\theta
	\label{eq:benth}
\end{align}
where $W_R$ and $W_\theta$ are independent standard Wiener processes. The $\alpha^2/R$ terms arise from It\^{o}'s Lemma; see for example \cite{daffer98,gard83hand}. As the $R$ equation is independent of $\theta$ we can consider the escape problem to oscillatory behaviour as a one-dimensional potential (well/energy) problem for $R(t)$
\begin{equation*}
{\dd R} = -\frac{\partial V}{\partial R} \dd t+\alpha \dd W_R,
\end{equation*}
where the potential function, $V$, is given by 
\begin{equation}
V:=\frac{\nu R^2}{2} - \frac{R^4}{2} + \frac{R^6}{6}-\frac{\alpha^2}{2} \ln{R}.
\label{eq:Vnontrunc}
\end{equation}

The maxima and minima of $V$ correspond to the equilibrium and limit cycles of the full system. Note that the potential depends on $\alpha$; for $\alpha=0.3$ and $\nu=0.5$, $\frac{\partial V}{\partial r}=0$ at exactly two points which are the potential wells corresponding to the stable limit cycle. However, as the noise amplitude increases the potential barrier decreases and disappears and so the escape time of the node decreases. More precisely, one can determine the bifurcation behaviour of the ODE
\begin{equation}
-\frac{\dd R}{\dd t} = \frac{\dd V}{\dd R}  =- \frac{\alpha^2}{2R} + \nu R - 2R^3 + R^5
\label{eq:delalpha}
\end{equation}
in the region $R>0$ with $V$ as in \eqref{eq:Vnontrunc} as a function of $\alpha>0$ and $\nu>0$. One can verify (eliminating $R$ from the conditions $V'(R)=V''(R)=0$) that there are saddle node bifurcations for this system at \begin{equation}
\nu^3-\nu^2-\frac{9}{2}\nu \alpha^2+\frac{27}{16}\alpha^4+4\alpha^2=0
\label{eq:saddlenodecurve}
\end{equation}
that has a cusp point (where $V'(R)=V''(R)=V'''(R)=0$) at $(\nu,\alpha)=(4/3,4\sqrt{3}/9)$.
Hence one can verify the existence of three equilibria for $R>0$ in a region near $\alpha=\nu=0$ bounded by $0<\alpha<\nu/2+O(\nu^2)$.

Within the bounded region of $(\alpha,\nu)$ given by (\ref{eq:saddlenodecurve}) there are three distinct equilibria of \eqref{eq:delalpha} at parameter-dependent locations we denote
$$
0<R_{\min}<R_{c}<R_{\max}.
$$
The $R_{\min}$ and $R_{\max}$ are attractors corresponding to minima of $V$ while $R_{c}$ is a local maximum (unstable) that forms a {\em gate} (boundary) between the basins of the two attractors in the terminology of \cite{berg08EK}. Note for $\alpha=0$ and $0<\nu<1$ the three equilibria are at $R_{\min}=0$, $R_{c}=R_c^0:=\sqrt{1-\sqrt{1-\nu}}$ and $R_{\max}=\sqrt{1+\sqrt{1-\nu}}$.

\begin{figure}
\centering
\includegraphics[width=\textwidth]{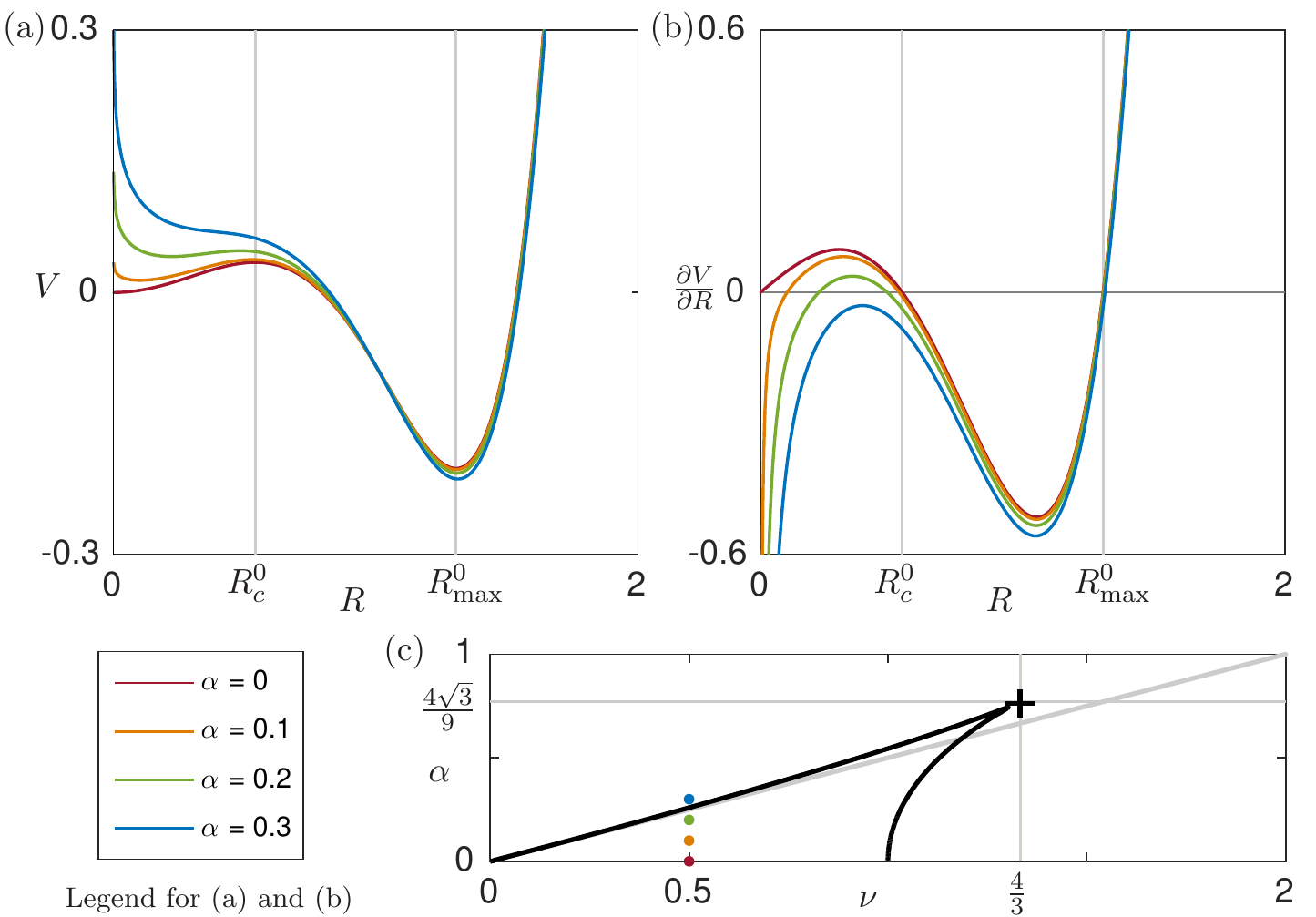}
\caption{The effect of varying $\nu$ and $\alpha$ on the potential function $V$ for $R>0.$ The curves $V$ (a) and $\frac{\partial V}{\partial R}$ (b) for $\nu=0.5$ and different values of $\alpha$. In panel (a) the deeper well corresponds to the stable limit cycle and the peak to the unstable limit cycle; note the asymptote at $R=0$ for $\alpha>0$. Each extrema in panel (a) corresponds to $\frac{\partial V}{\partial R}=0$ in panel (b). The saddle node bifurcation curve is shown in the $(\nu,\alpha)$-plane with cusp point marked $+$ in panel (c).  The parameter points marked as coloured dots in (c) for $\nu=0.5$ correspond to the curves in panels (a) and (b); see also the legend. The line $\nu=2\alpha$ is also marked and for $\nu<1$ this is very close to the bifurcation curve. For $\nu>2\alpha$ there are is a well of $V$ near the origin and $\frac{\partial V}{\partial R}=0$ for three values of $R>0$, for $\nu<2\alpha$ there is no well near the origin and $\frac{\partial V}{\partial R}=0$ only once in $R>0$.
}
\label{fig:1nodepot}
\end{figure}

Figure~\ref{fig:1nodepot} shows the potential $V$ and its derivative $\frac{\partial V}{\partial R}$ for different values of $\alpha$ along with the saddle-node bifurcation curve in the $(\nu,\alpha)$-plane. Due to the symmetry of $V$, a reflection at $R=0$, along which the $\ln R$ term creates an asymptote for $\alpha>0$, we plot $V$ and $\frac{\partial V}{\partial R}$ for $R>0$. The zeros of $\frac{\partial V}{\partial R}$ in panel (b) correspond to the extrema of $V$ and the equilibria $R_c^0$ and $R_{\text max}^0$ are marked. Panel (c) shows the cusp point  of the saddle-node bifurcation curve in the $(\nu,\alpha)$-plane with the line $\nu=\alpha/2$. The dots mark the parameter values of the curves in panels (a) and (b); note that $\nu=0.5, \alpha=0.3$ is not within the bounded region and the corresponding curve in panel (b) only has one zero. 

Figure~\ref{fig:1nodepot} shows that for $\alpha>0$ we have that whenever $R_c$ exists, it satisfies $R_{c}<R_{c}^0$. We choose a threshold $\xi$ such that 
$$
R_c<\xi<R_{\max}.
$$
Although the leading order escape times are independent of $\xi$, for comparability to \cite{benj12pheno} we take $\xi=R_c^0$ in this section. In later sections we take a fixed value of $\xi=0.5$.

The mean escape time $T(\nu,\alpha)$ from (\ref{eq:meanesc}) can be found by considering solutions $R(t)$ of the SDE (\ref{eq:benr}) and defining the mean first escape time
$$
W_\xi(R_0) := \bE ( \inf\{t>0~:~R(t)\geq \xi,~\mbox{given}~R(0)=R_0\}).
$$
This mean escape time $W_{\xi}(R)$ satisfies a Poisson problem
\begin{equation}
	\frac{\alpha^2}{2} \frac{\dd ^2}{\dd R^2} W_{\xi}(R) - V'(R)\frac{\dd }{\dd R} W_{\xi}(R) =-1, \ \ \lim_{R\rightarrow 0+} |W_{\xi}(R)|<\infty, \ \ W_{\xi}(a)=0.
	\label{eq:pois}
\end{equation}
If we define $u(R)=\frac{\dd  W_{\xi}}{\dd R}$ then \eqref{eq:pois} can be simplified using an integrating factor $\exp(\frac{-2V}{\alpha^2})$.
The boundary conditions imply $W_{\xi}(R) \to 0$ and 
$ \frac{{\rm{d}W_{\xi}}}{\dd R}\exp (\frac{-2V}{\alpha^2}) \to 0$ as $R \to 0$. 
Integrating and applying the boundary conditions
\begin{equation}
	W_{\xi}(R)= \frac{2}{\alpha^2} \int_{x=R}^{\xi} \int^x_{y=0} \exp\bigg[\frac{2(V(x)-V(y))}{\alpha^2}\bigg] \dd y\, \dd x.
\label{eq:WarVs}
\end{equation}
Substituting the expression for $V$ from \eqref{eq:Vnontrunc} gives  
\begin{align*}
	W_{\xi}(R)
	&= \frac{2}{\alpha^2} \int_{x=R}^{\xi} \int^x_{y=0} \exp\frac{2}{\alpha^2}\bigg[ \frac{\alpha^2}{2}( \ln{y}- \ln{x} )  +  \frac{\nu( x^2-y^2)}{2}+ \frac{y^4-x^4}{2}+\frac{x^6-y^6}{6}  \bigg] \dd y\, \dd x.
\end{align*}
Therefore, as $T(\nu, \alpha) = W_{\xi}(0)$ we have 
\begin{equation}
	T(\nu, \alpha) = \frac{2}{\alpha^2} \int_{x=0}^{\xi} \int^x_{y=0} \frac{y}{x} \exp\bigg(\frac{\nu(x^2-y^2)+(y^4-x^4)+(x^6-y^6)/3}{\alpha^2}\bigg) \dd y\, \dd x.
	\label{eq:meanT}
\end{equation}
Kramers' formula \cite{berg13kramers} uses Laplace's method to give an asymptotic expression for \eqref{eq:WarVs}:
\begin{equation}
	T_{K}(\nu,\alpha) = \frac{2\pi}{\sqrt{|V''(R_c)|V''(R_{\min})}} \exp \left[\frac{2(V(R_c)-V(R_{\min})}{\alpha^2} \right]
\end{equation}
as $\alpha\rightarrow 0$. Moreover, one can obtain upper and lower bounds on (\ref{eq:meanT}) that are valid for general $0<\nu<1$ and $\alpha>0$ (cf. \cite{ashwin16quant}). We write 
\begin{equation}
p=x^2-y^2,~~q=x^2+y^2,~~\Rightarrow~~ \frac{\partial(p,q)}{\partial(x,y)}= \left|\begin{array}{rr} 2x & -2y \\ 2x & 2y \end{array}\right|=8 xy.
\end{equation}
The triangle defined by $(x,y)$ such that $0<y<x<\xi$ transforms to $0<q<2\xi^2$, $0<p<\min(q,2\xi^2-q)$, so we have
\begin{equation}
T(\nu, \alpha) = \frac{1}{\alpha^2} \int_{q=0}^{2\xi^2} \int_{p=0}^{\min(q,2\xi^2-q)} \frac{1}{2(p+q)} \exp\bigg(\frac{p(\nu-q+p^2/12+q^2/4)}{\alpha^2}\bigg) \text{d}p\, \text{d}q.
\label{eq:T}
\end{equation}

Noting that $0<p<q$ in the region of integration implies $q<p+q<2q$, in addition noting  $q^2/3>p^2/12+q^2/4>q^2/4$ in this region we obtain the following estimates for the integrand of (\ref{eq:T})
\begin{align*}
\frac{1}{2q}\exp\bigg(\frac{p(\nu-q+q^2/3)}{\alpha^2}\bigg) &>\frac{1}{2(p+q)}\exp\bigg(\frac{p(\nu-q+p^2/12+q^2/4)}{\alpha^2}\bigg)\\
\frac{1}{4q}\exp\bigg(\frac{p(\nu-q+q^2/4)}{\alpha^2}\bigg)&< \frac{1}{2(p+q)}\exp\bigg(\frac{p(\nu-q+p^2/12+q^2/4)}{\alpha^2}\bigg)
\end{align*}
Hence, we have lower and upper bounds $T_l(\nu,\alpha)<T(\nu,\alpha)<T_u(\nu,\alpha)$
given by:
\begin{align}
T_l(\nu, \alpha) & = \int_{q=0}^{\xi^2} \int_{p=0}^{q} \frac{1}{4q\alpha^2} \exp\bigg(\frac{p(\nu-q+q^2/4)}{\alpha^2}\bigg) \dd p\, \dd q,\\
T_u(\nu, \alpha) &= \int_{q=0}^{2\xi^2} \int_{p=0}^{q} \frac{1}{2q\alpha^2} \exp\bigg(\frac{p(\nu-q+q^2/3)}{\alpha^2}\bigg) \dd p\, \dd q.
\end{align}
The inner integrals can be evaluated to give
\begin{align}
T_l(\nu, \alpha) & = \int_{q=0}^{\xi^2} \frac{e^{\frac{q(\nu-q+q^2/4)}{\alpha^2}}-1}{4q(\nu-q+q^2/4)}  \dd q, \label{eq:Tl} \\
T_u(\nu, \alpha) &= \int_{q=0}^{2\xi^2} \frac{e^{\frac{q(\nu-q+q^2/3)}{\alpha^2}}-1}{2q(\nu-q+q^2/3)} \dd q.\label{eq:Tu}
\end{align}

Moreover, note that Laplace's method can be used to get an asymptotic estimate for these bounds in the case of fixed $\nu$ and $\alpha\rightarrow 0$. Define
\begin{align*}
f_l(q)=(4q(\nu-q+q^2/4))^{-1},~~&f_u(q)=(2q(\nu-q+q^2/3))^{-1},\\ g_l(q)=q(-\nu+q-q^2/4),~~&g_u(q)=q(-\nu+q-q^2/3).
\end{align*}
One can verify that for a fixed $\nu \in (0,1)$, the functions $g_l(q)$ and $g_u(q)$ have unique minima on $[0,\xi^2]$ and $[0,2\xi^2]$ at $c_l=(4-2\sqrt{4-3\nu})/3$ and $c_u=1-\sqrt{1-\nu}$, respectively. It is also easy to verify that $f_l(c_l) \neq 0$ and $f_u(c_u) \neq 0$. Thus, to leading order as $\alpha \to 0$. 
\begin{align*}
T_l(\nu, \alpha) &\sim f_l(c_l)\sqrt{\frac{2\pi\alpha^2}{|g_l^{\prime \prime}(c_l)|}}\exp \left[-\frac{g_l(c_l)}{\alpha^2}\right],  \\
T_u(\nu, \alpha) &\sim f_u(c_u)\sqrt{\frac{2\pi\alpha^2}{|g_u^{\prime \prime}(c_u)|}}\exp\left[-\frac{g_u(c_u)}{\alpha^2}\right].
\end{align*}

The functions derived here show the direction connection between our analytic results for the escape time of one node given by~\eqref{eq:benr} and the classic one-dimensional Kramers' escape time~\eqref{eq:kramers1D} as well as the dependence on the excitability $\nu$ and the noise amplitude $\alpha$.

\Fref{fig:1nodeaTnT} shows the integral $T(\nu,\alpha)$ numerically estimated using {\sc Maple} from \eqref{eq:T} plotted against $\alpha$ for different values of $\nu$ in panel (a), and plotted against $\nu$ for different values of $\alpha$ in panel (b); compare with~\cite{benj12pheno} Figure~5. Panel (a) shows the lower and upper bound $T_l(\nu,\alpha)$ and $T_u(\nu,\alpha)$ also numerically estimated using {\sc Maple} from~\eqref{eq:Tl} and~\eqref{eq:Tu} respectively. The approximation of $T(\nu,\alpha)$ from~\cite{benj12pheno} is shown for comparison for each $\nu$ value.  In panel (b) a cross is marked on the curve $\alpha=0.05$ at $\nu=0.2$, we use these values in subsequent sections.   The panels also show points that are numerical approximations of the mean escape time, computed in {\sc Matlab} using the Heun method. For each point, two hundred realisations of~\eqref{eq:ben1} were computed with step size $h=10^{-2}$. As the radial dependence \eqref{eq:benr} is independent of $\omega$, we fix $\omega=0$ in our computations.  The figure uses threshold $\xi=R_c^0$, as in \cite{benj12pheno}, which corresponds to the amplitude of the unstable periodic orbit of \eqref{eq:benf} for $\alpha=0$.  Comparing to the approximation of $T$ from~\cite{benj12pheno} we find reliable bounds and good agreement with the numerics over a large range of $(\alpha,\nu)$. 

\begin{figure}[ht!]
	\centering
	\includegraphics[width=\textwidth]{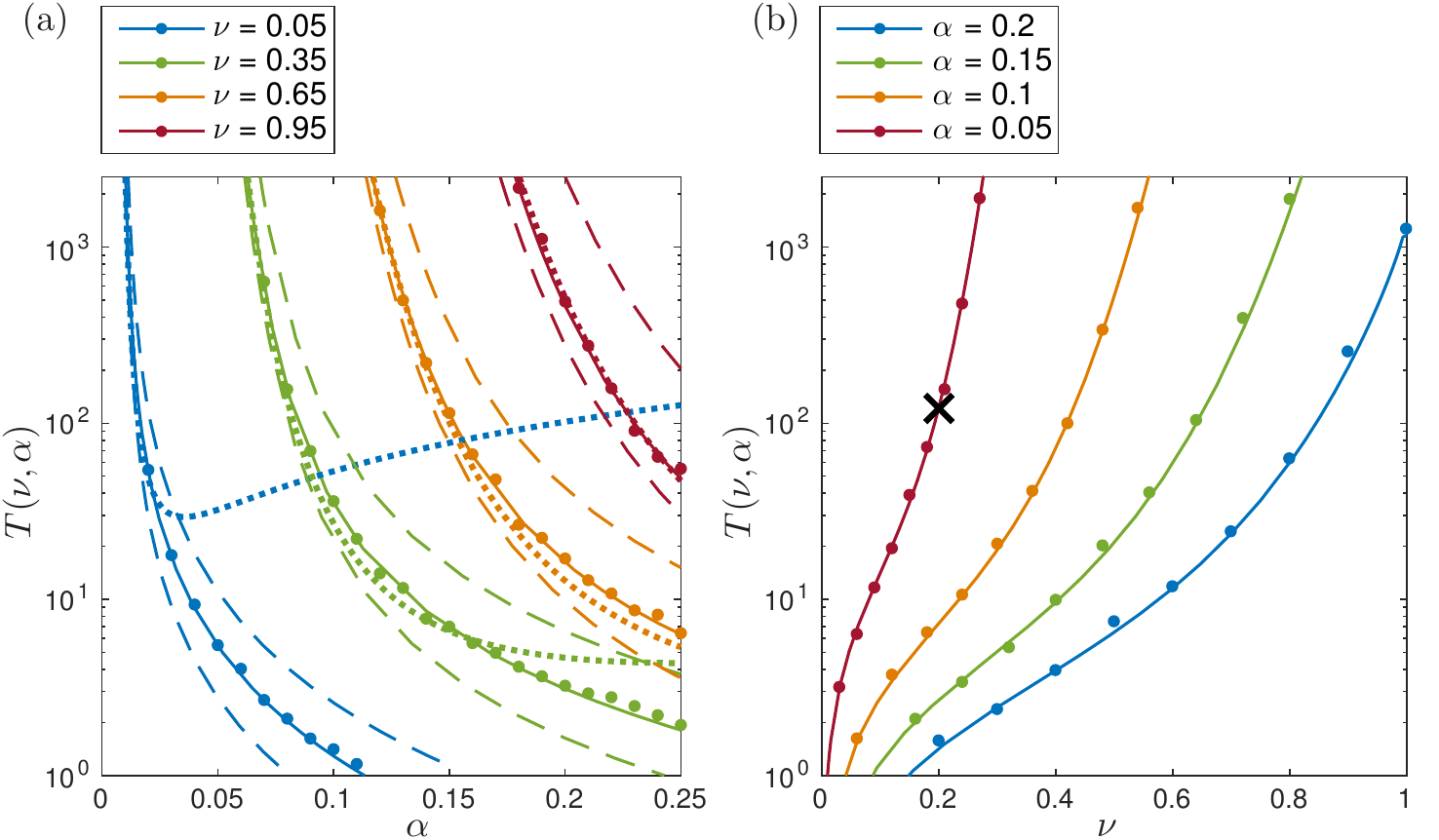}
	\caption{Numerical approximations of the mean escape time $T(\nu,\alpha)$ from $R=0$ to threshold $\xi=R_c^0$ (solid) from (\ref{eq:T}), along with mean escape times from numerical simulations of escape time (dots). Panel (a) shows the upper bound $T_u(\nu,\alpha)$ and lower bound $T_l(\nu,\alpha)$ computed from (\ref{eq:Tl}) and (\ref{eq:Tu}) respectively (dashed). The dotted curves show the estimate of $T(\nu,\alpha)$ from Benjamin {\em et al} {\cite[Figure 5]{benj12pheno}}. The cross in panel (b) is on the curve $\alpha=0.05$ at $\nu=0.2$ and corresponds to $T(0.2,0.05)=121.64$.}
	\label{fig:1nodeaTnT}
\end{figure}

\section{Sequential escape times for coupled bistable nodes}
\label{sec:2node}

We now consider $N$ identical bistable nodes of the type \eqref{eq:ben1} discussed in Section~\ref{sec:1node}, coupled linearly as in~\cite{benj12pheno}:
\begin{equation}
	\dd z_i(t) =\left[ f(z_i) + \beta \sum_{j\neq i} A_{ji}(z_j - z_i) \right]\dd t +\alpha\, \dd  W_i (t),
	\label{eq:neteq}
\end{equation}
for $i=1,...,N$, where $f$ is defined in \eqref{eq:benf} and depends on $\nu$. This generalises the setting of \cite{act17domino} to a case of bistable nodes where one of the attractors is periodic. For this network, $A_{ji}\in\{0,1\}$ is the adjacency matrix and $\beta\geq 0$ the coupling strength: we assume that $A_{ii}=0$ for all $i$.

We fix $0<\nu<1$ and $\nu>2\alpha$ to ensure that each individual node is in the bistable regime with an attracting equilibria for the radial dynamics (\ref{eq:benr}) at $R_{\min}$ and $R_{\max}$. For sequential escapes, we will assume the parameter regime is such that the rate of return from $R_{\max}$ to $R_{\min}$ is very small relative to the rate of escape from $R_{\min}$ to $R_{\max}$. This can be quantified in terms of the potential (\ref{eq:Vnontrunc}): for $\alpha=0$ and $0<\nu<1$ we have
$$
V(R_{\min})=0,~~V(R_{\max})=\frac{\nu}{2}-\frac{1}{3}\left(1+(1-\nu)^{\frac{3}{2}}\right).
$$
One can verify for this case that there are two attractors and
\begin{equation}
	V(R_{\max})<V(R_{\min}).
	\label{eq:marginalzero}
\end{equation}
if and only if $0<\nu<3/4$. Moreover, for $0<\nu<3/4$ fixed and increasing $\alpha$, (\ref{eq:marginalzero}) is maintained as long as there are still three roots: the $-\alpha^2/2 \ln R$ dependence means that $V(R_{\min})$  increases while $V(R_{\max})$ decreases with $\alpha$. 

If all nodes start in the quiescent state it is a natural question to ask how the coupling affects the sequence of escape times of the nodes in the network \cite{act17domino}. 
We discuss the general set-up in the next section and then focus on the example of two  coupled nodes.

\subsection{Statistics of sequential escapes}
\label{sec:seqstats}

We fix a threshold $\xi>0$ for all nodes and consider the first escape time for the $i$th node 
$$
\tau^{(i)} := \inf\{t>0~:~ |z_i(t)|\geq \xi~\mbox{ given }~ z_i(0)=0\}
$$
from the quiescent state, assuming that all nodes start at $z_k=0$ ($k=1,\ldots,N$) at time $t=0$. The distribution of the random variable $\tau^{(i)}$ is affected by the noise realization and the chosen threshold $\xi$, as well as the behaviour of other nodes in the network via the coupling structure $A_{ij}$ and strength $\beta$. We choose $\xi$ such that the region $|z_i|<\xi$ contains the whole basin of attraction of the trivial solution $z=0$ in the limit $\alpha\rightarrow 0$. Note that the coupling deforms the basin of attraction and so it may be necessary to choose $\xi$ somewhat greater than $R_c$, depending on $\beta$.

For a fixed threshold $\xi$ and initial condition, the independence of the noise paths means that, with probability one, no two escapes will be at precisely the same time. Therefore, there will be a sequential ordering of nodes corresponding to the order in which they escape. Using the notation of~\cite{act17domino}, there is a permutation $s(k)$ of $\{1,\ldots,N\}$  such that 
\begin{equation}
0<\tau^{(s(1))}<\tau^{(s(2))}<\cdots<\tau^{(s(N))}
\label{eq:defines}
\end{equation}
where the times $\tau^{(s(k))}$ and the permutation $s(k)$ are random variables that depend on the realization of the noise. We also define
$$
\tau^{i}:=\tau^{(s(i))}
$$
which can be thought of as the time until the $i$th escape, and we write $\tau^{0}=0$. For any integers $0\leq \ell<k\leq N$
\begin{equation}
\tau_N^{k|\ell}:=\tau^{k}-\tau^{\ell}
\label{eq:fpt}
\end{equation}
is the first passage time between the $\ell$th and $k$th escapes. Although \cite{act17domino} considers both the timing and sequence of escapes, in this paper we concentrate primarily on $\tau^{k|\ell}_N$.

Sequential escape can be understood in terms of this set of distributions which are governed by distributions with exponential tails and therefore by Kramers-type rates. In these cases, the essential information about the sequential escapes is given by the mean first passage time between escapes $\ell$ and $k$ that is the expectation of $\tau_N^{k|\ell}$, i.e.
\begin{equation}
T_N^{k|\ell}: =  \bE\left(\tau_N^{k|\ell}\right)=\int_{t=0}^{\infty}  \left[1-Q_N^{k|\ell}(t)\right]\, \dd t,
\label{eq:mfpt}
\end{equation}
where
\begin{equation}
Q_N^{k|l}(t) = \mathbb{P}(\tau_N^{k|l} \leq t)
\label{eq:cdfpt}
\end{equation}
is the cumulative distribution of first passage times from $\ell$ to $k$.
Note that if $k>\ell>n$ then $\tau_N^{k|n}=\tau_N^{k|\ell}+\tau_N^{\ell|n}$, and so taking expectations we have
\begin{equation}
T_N^{k|n}=T_N^{k|\ell}+T_N^{\ell|n}.
\end{equation}
Section~\ref{sec:master} returns to this general case in more detail, while for the rest of this section we consider specific examples of sequential escapes for \eqref{eq:neteq} in the case $N=2$.

We consider three coupling scenarios for \eqref{eq:neteq} with $N=2$: disconnected ($\beta=0$ or $A_{12}=A_{21}=0$); unidirectional ($A_{12}=1$ but $A_{21}=0$); and bidirectional ($A_{12}=A_{21}=1$). The study~\cite{benj12pheno} investigates the influence of $\beta$ on the mean first passage time such that the first node has escaped, i.e. $T_2^{1|0}$, but clearly $T_2^{2|1}$ is also of interest. The paper \cite[Figure~6(b)]{benj12pheno} shows a number of limiting behaviours that we aim to explain here using the potential function for the coupled system.  Here, we focus mainly on the case of two bidirectionally coupled nodes with brief comparison to the unidirectionally coupled and uncoupled cases.

\subsection{Two bidirectionally coupled nodes}
\label{sec:2binet}

 Writing system~\eqref{eq:neteq} for $N=2$ with bidirectional coupling ($A_{12}=A_{21}=1$) in polar coordinates $z_i(t) = R_i(t)\exp[\imath \theta_i(t)]$ we have
\begin{align}
	\dd R_i &= \bigg[-(\nu+\beta)R_i + 2R_i^3 - R_i^5 + \beta R_k\cos(\phi) + \frac{\alpha^2}{2R_i} \bigg]\dd t  +\alpha\,\dd W_{R_i} , 
	\label{eq:2r}\\
	\dd \phi & = -\beta \bigg( \frac{R_k}{R_i} + \frac{R_i}{R_k} \bigg)\sin{\phi} \,\dd t + \alpha \bigg( \frac{1}{R_i}\dd W_{\theta_i}  - \frac{1}{R_k}\dd W_{\theta_k} \bigg) .
	\label{eq:2phi}
\end{align}
where $\phi = \theta_i-\theta_k$ is the phase difference between the two nodes and $W_{R_i}$, $W_{\theta_i}$ and $W_{\theta_k}$ are independent Wiener processes for $i,k\in\{1,2\}$.
The subsystem~\eqref{eq:2r} can be written as a noise driven potential system
\begin{equation}
\frac{\dd }{\dd t} R_1  = - \frac{\partial V}{\partial R_1},~~~~
\frac{\dd }{\dd t} R_2  = - \frac{\partial V}{\partial R_2},~~~~
\label{eq:2rpot}
\end{equation}
for the potential
\begin{equation}
	V=\frac{1}{2} \bigg[ \frac{R_1^6+R_2^6}{3} - (R_1^4 + R_2^4) + (\nu+\beta)(R_1^2 + R_2^2) - \alpha^2\ln(R_1R_2)\bigg] - \beta R_1R_2 \cos \phi.
	\label{eq:2biV}
\end{equation}
The phase difference $\phi$ governed by (\ref{eq:2phi}) has, for $\beta>0$ and the low noise limit $\alpha\rightarrow 0$, a stable fixed point at $\phi=0$ and an unstable fixed point at $\phi=\pi$ if the $R_{i}$ are bounded away from zero. 
All local minima of the potential \eqref{eq:2biV} will have $\phi=0$,
as will all saddles that act as gates between basins of attraction. Hence we restrict to $\phi=0$ from hereon and analogously to \cite{act17domino} we perform a bifurcation analysis of the noise-free version of (\ref{eq:2r}), namely
\begin{align}
\frac{\dd}{\dd t} R_i &= -(\nu+\beta)R_i + 2R_i^3 - R_i^5 + \beta R_k + \frac{\alpha^2}{2R_i} , 
\label{eq:2rnonoise}
\end{align}
$\nu=0.2$ and $\alpha=0.05$. \Fref{fig:2bibif} shows the bifurcation diagram of $\beta$ against $R_1$, analogously to \cite{act17domino}. Panels (b)--(d) show the $(R_1,R_2)$-plane with the equilibria of \eqref{eq:2rnonoise} and potential contours of \eqref{eq:2biV}  for $\phi=0$. Each panel (b)--(d) also shows a typical realisation of~\eqref{eq:neteq} computed in {\sc{Matlab}} using the Heun method with initial point $z_1=z_2=0$.

The bifurcation diagram of (\ref{eq:2rnonoise}) depicted in \Fref{fig:2bibif}(a) is computed in {\sc Auto}~\cite{AutoOrig} and shows two symmetric, simultaneous saddle-node (SN) bifurcations at $\beta_{\rm{SN}}=0.0154297$, the second of which is difficult to discern as three equilibria (the saddle and sink involved in the SN and the sink of the deepest well) have very close values in $R_1$.
There is a pitchfork bifurcation at $\beta_{\rm{PF}}=0.164917$.
These bifurcations split the diagram into three regimes:
\begin{itemize}
\item $0<\beta<\beta_{\rm{SN}}$ has nine equilibria, one source, four sinks
and four saddles.
\item $\beta_{\rm{SN}}<\beta<\beta_{\rm{PF}}$ has five equilibria; one source, two
saddles and two sinks.
\item $\beta>\beta_{\rm{PF}}$ has three equilibria; two sinks and one
saddle.
\end{itemize}
As in \cite{act17domino} the first regime corresponds to {\em weak coupling}, the second to {\em intermediate coupling} and the third to {\em strong coupling} (see also \cite{BFG2007a,BFG2007b}). The remainder of this paper examines the influence of these regimes on the escape times.

\begin{figure}
\centering
\includegraphics[width=0.95\textwidth]{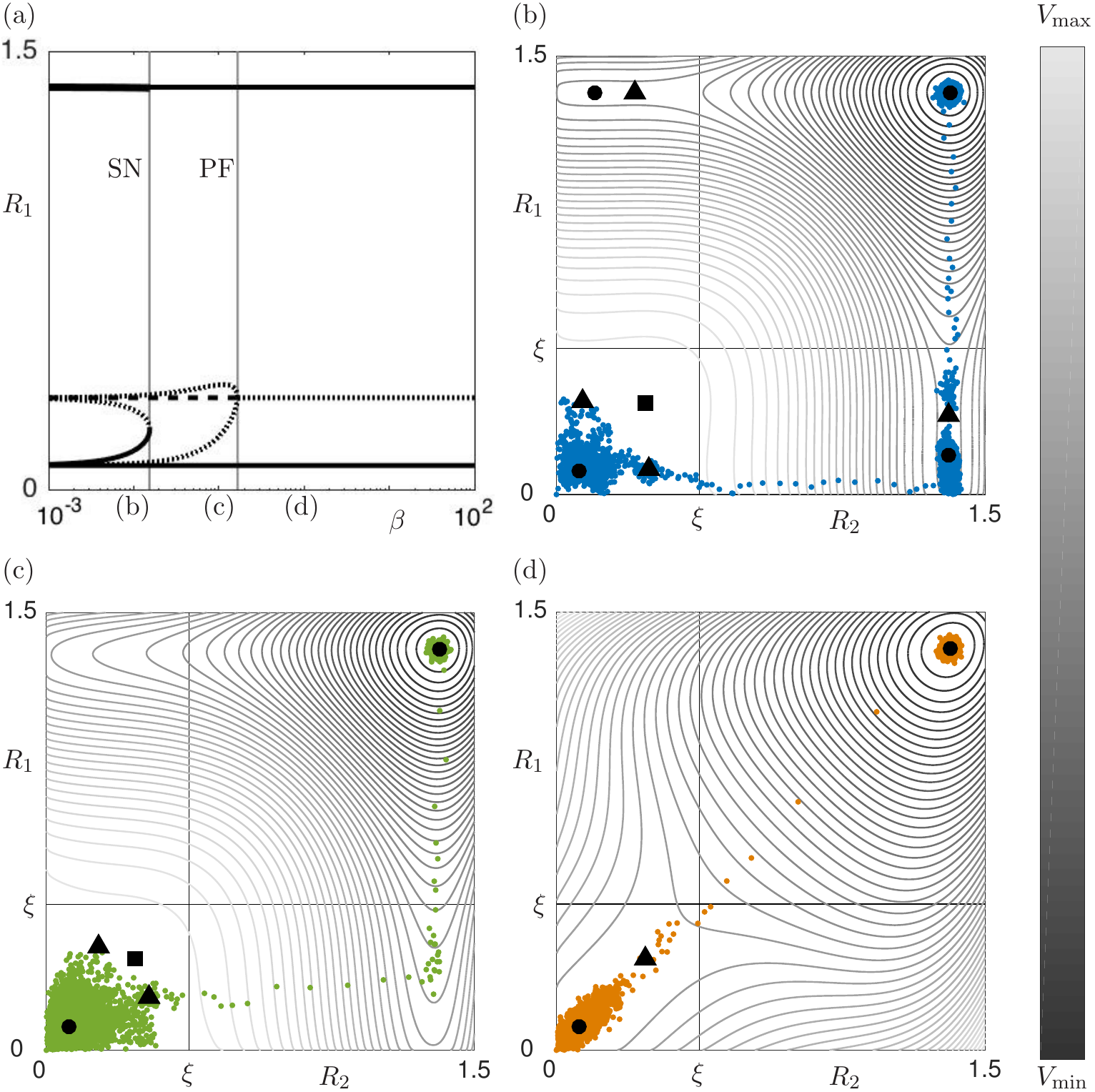}
\caption{Bifurcation diagram and corresponding phase portraits for $\nu=0.2$ and  $\alpha=0.05$.  Panel (a) shows the bifurcation diagram of $R_1$ against $\beta$ for \eqref{eq:2rnonoise}.  There are three regimes separated by two simultaneous saddle-node (SN) bifurcations and a pitchfork bifurcation (PF). Panels (b)--(d) show typical realisations plotted with contour lines of  potential $V$ for $\phi=0$ at $\beta$ values representative of each coupling regime. Specifically, for the weak coupling regime $\beta=0.01$ (b), for the intermediate coupling regime $\beta=0.1$ (c) and for the strong coupling regime $\beta=1$ (d). Equilibria of \eqref{eq:2rnonoise} are shown as $\bullet$  for sinks, $\blacksquare$  for sources and $\blacktriangle$  for saddles. The straight lines $R_i=\xi=0.5$ show the thresholds used to quantify escapes.
}
\label{fig:2bibif}
\end{figure}

\subsection{Estimating escape times for two coupled nodes}
\label{sec:2est}

We numerically compute $T_2^{1|0}$, $T_2^{2|0}$ and $T_2^{2|1}$ in {\sc{Matlab}} by fixing example parameters $\nu=0.2$ and $\alpha=0.05$ and computing an ensemble of $2000$ realisations of \eqref{eq:neteq} for $i=2$ using the Huen method with step size $h=10^{-3}$. Note, we set $\omega=0$ in \eqref{eq:neteq} as the radial dynamics do not depend on $\omega$. We choose threshold $\xi=0.5$ to determine the escape times, as shown in \Fref{fig:2bibif}(b)--(d).  The first and second escape times $\tau_2^{1|0}$ and $\tau_2^{2|0}$ are averaged over the ensemble to give numerical approximations of $T_2^{1|0}$ and $T_2^{2|0}$, while $T_2^{2|1} =T_2^{2|0} - T_2^{1|0} $. 

Using numerical integration of the one-node case \eqref{eq:meanT}, we first estimate the limits for $\beta\rightarrow 0$ and $\beta\rightarrow\infty$ for bidirectional coupling. In the infinite coupling limit, the two systems are strongly synchronised and act as a single node with the same $\nu$ but attenuated noise, $\alpha/\sqrt{2}$. For $\nu=0.2$, $\alpha=0.05$ and $\xi=0.5$ in the limit $\beta\rightarrow \infty$ we expect
\begin{align}
T^{1|0}_2&\rightarrow {\cal{T}}_{(1)}=T\left(\nu, \alpha/\sqrt{2}\right)\approx6312.21, \label{eq:lim1} \\
T^{2|1}_2&\rightarrow 0. \nonumber 
\end{align}
The uncoupled limit has independence of escapes so the approximate mean escape time is half the mean escape time of one node. For $\nu=0.2$, $\alpha=0.05$ and $\xi=0.5$ in the limit $\beta\rightarrow 0$ (or in the uncoupled case for all $\beta$) we expect
\begin{align}
T^{1|0}_2&\rightarrow {\cal{T}}_{(2)}=\frac{T(\nu,\alpha)}{2}\approx 96.51, \label{eq:lim2}\\
T^{2|1}_2&\rightarrow {\cal{T}}_{(3)}=T(\nu,\alpha) \approx 193.01.  \label{eq:lim3}
\end{align}
We also note that for the unidirectional case, the first escape $T_2^{1|0}$ will be either from the driving node with mean $T(\nu,\alpha)$ or from the driven node with mean $T(\nu,\alpha/\sqrt{2})$. The sum of the rates of escape corresponds in the limit $\beta\rightarrow \infty$ to
\begin{equation}
T_2^{1|0}\rightarrow {\cal{T}}_{(4)}= \frac{T(\nu,\alpha)T(\nu, \frac{\alpha}{\sqrt{2}})}{T(\nu,\alpha)+T(\nu, \frac{\alpha}{\sqrt{2}})}=188.01  \label{eq:lim4}
\end{equation}

For the bidirectionally coupled case, we also find that many features of the scalings of first passage times $T^{1|0}$ and $T^{2|1}$ can be understood from the coupling regimes of the deterministic potential system \eqref{eq:2rpot}. We estimate these scalings in the three coupling regimes using generalized Eyring-Kramers Laws~\cite{berg08EK,berg13kramers} for saddles that are multidimensional and/or passing through bifurcation. 

For the given value of $\nu$ and $\alpha$ we relate the Eyring-Kramers times $T_K$ to the numerical experiments $T$ using a common linear transformation of the form
$$
T \simeq A T_K +B
$$
where the constants $A,B$ are determined from the one-node case. Specifically, they are the unique solution such that the one-node estimates for $\mathcal{T}_{(1)}$ and $\mathcal{T}_{(2)}$ are correct, namely
$$
T(\nu,\alpha)=A T_{K}(\nu,\alpha)+B,~~T(\nu,\alpha/\sqrt{2})=A T_{K}(\nu,\alpha/\sqrt{2})+B.
$$
so that $A$ and $B$ do not depend on $\beta$. For $\nu=0.2$ and $\alpha=0.05$ we find $A=4.38$ and $B=-295$.

\paragraph{First escape time statistics}

The mean first escape time $T^{1|0}_2$ is associated with escape over one of two possible saddles for weak and intermediate coupling, $\beta<\beta_{\text PF}$. These saddles merge into a single synchronised saddle for strong coupling, $\beta>\beta_{\rm{PF}}$, where $T^{1|0}_2$ is associated with escape over the only remaining saddle. 
A multidimensional Eyring-Kramers Law~\cite{berg13kramers} gives an asymptotic approximation $\widehat{T}_2^{1|0}$ for $T_2^{1|0}$.
Denote by $x$ the potential minimum where neither node has escaped and denote by $y$ one of the two saddles that undergo the pitchfork bifurcation, or the only saddle for $\beta>\beta_{PF}$.
Then we compute
\begin{equation}
\widehat{T}_2^{1|0}(\beta) = \frac{2\pi}{|\lambda_1(y)|}\sqrt{\frac{|{\rm{det}}(\nabla^2V(y))|}{{\rm{det}}(\nabla^2V(x))}}{\rm{e}}^{[V(y)-V(x)]/\varepsilon}
\label{eq:EKfirst}
\end{equation}
where $\varepsilon=\alpha^2/2$ and  $\lambda_1(y)$ is the single negative eigenvalue of the Hessian $\nabla^2V(y)$. Here we use the two node potential $V$ given by \eqref{eq:2biV} with $\alpha=0.05$, $\nu=0.2$ and $\phi=0$. This breaks down for $\beta \to \beta_{PF}$ where $\lambda_1(y)\to 0$. 
Berglund and Gentz~\cite{berg08EK} derive a multidimensional Eyring-Kramers Law for escapes from a potential well over a saddle that undergoes pitchfork bifurcation. This corresponds to the case as $\beta$ passes through $\beta_{\text PF}$ and gives an asymptotic expression $\widetilde{T}_2^{1|0}$ for $T_2^{1|0}$ on either side of the pitchfork bifurcation. Denote by $z_{\pm}$ the two saddles for $\beta<\beta_{PF}$ that merge at the pitchfork bifurcation, and by $z$ the saddle for $\beta>\beta_{\text PF}$. Denote by $\mu_1<0<\mu_2$ the eigenvalues of $\nabla^2V(z_{\pm})$ and by $\lambda_1<0<\lambda_2$ the eigenvalues of $\nabla^2V(z)$. Finally we let $x$  be as before.
Then by \cite[Corollary 3.8]{berg08EK} and noting $\varepsilon=\alpha^2/2$:
\begin{equation}
\widetilde{T}_{2}^{1|0}(\beta) = 
\left\{
\begin{array}{cl}
\displaystyle{2\pi \sqrt{\frac{\mu_2 +(2\varepsilon C_4)^{1/2}}{|\mu_1|{\rm{det}}(\nabla^2V(x))}}\frac{{\rm{e}}^{[V(z_{\pm})-V(x)]/\varepsilon}}{\Psi_-(\mu_2/(2\varepsilon C_4)^{1/2})}[1+E_-(\varepsilon,\mu_2)]}
& \mbox{ for }\beta<\beta_{\text PF}\\
\displaystyle{ 2\pi \sqrt{\frac{\lambda_2 +(2\varepsilon C_4)^{1/2}}{|\lambda_1|{\rm{det}}(\nabla^2V(x))}}\frac{{\rm{e}}^{[V(z)-V(x)]/\varepsilon}}{\Psi_+(\lambda_2/(2\varepsilon C_4)^{1/2})}[1+E_+(\varepsilon,\lambda_2)]}& \mbox{ for }\beta\geq\beta_{\text PF}
\end{array}\right.
\label{eq:EKTfirst}
\end{equation}
The coefficient $C_4>0$ represents the coefficient of the quartic expansion near the bifurcation point and the $\Psi_{\pm}$ are given in \cite{berg08EK} as
\begin{align*}
\Psi_+(\gamma) &= \sqrt{\frac{\gamma(1+\gamma)}{8\pi}}{\rm{e}}^{\frac{\alpha^2}{16}}J_{1/4}\left( \frac{\alpha^2}{16} \right),\\
\Psi_-(\gamma) &=  \sqrt{\frac{\pi \gamma(1+\gamma)}{32}}{\rm{e}}^{-\frac{\alpha^2}{64}}\left[ I_{-1/4}\left( \frac{\alpha^2}{64} \right) +I_{1/4}\left( \frac{\alpha^2}{64} \right) \right].
\end{align*}
The Bessel functions $I_{\pm 1/4}$ and $J_{1/4}$ are of the first and second kinds, respectively. The error functions $E_{\pm}$ are bounded and tend to zero in $\varepsilon\rightarrow 0$ for some neighbourhood of $\lambda_1=0$: we set $E_{\pm}=0$ to define $\widetilde{T}^{1|0}_2(\beta)$.
The quantities in (\ref{eq:EKTfirst}) are available in terms of properties of the potential $V$ and so numerically computable from the parameters. 

\paragraph{Second escape time statistics}

The mean second escape $T^{2|1}_2$ has three, rather than two, identifiable regimes. For $\beta<\beta_{SN}$ it is associated with noise-induced escape from the attracting state where only one of the nodes has escaped. Here we again use the multidimensional Eyring-Kramers Law~\cite{berg13kramers} to find an asymptotic approximation $\widehat{T}_2^{2|1}$ for $T_2^{2|1}$ for $0\leq \beta<\beta_{SN}$. Specifically, let $x$ and $z$ be the sink and saddle respectively that undergo the saddle-node bifurcation at $\beta_{SN}$. Using the two node potential $V$ given by \eqref{eq:2biV} with $\alpha=0.05$, $\nu=0.2$ and $\phi=0$, we compute
\begin{equation}
\widehat{T}_2^{2|1}(\beta) = \frac{2\pi}{|\lambda_1(z)|}\sqrt{\frac{|{\rm{det}}(\nabla^2V(z))|}{{\rm{det}}(\nabla^2V(x))}}{\rm{e}}^{[V(z)-V(x)]/\varepsilon}
\label{eq:EKTsec}
\end{equation}
where $\varepsilon=\alpha^2/2$ and  $\lambda_1(z)$ is the single negative eigenvalue of the Hessian $\nabla^2V(z)$. This is valid for $\beta\ll\beta_{SN}$, but it breaks down for $\beta \to \beta_{SN}$ where $\lambda_1(z)\to 0$. The approximations \eqref{eq:EKTfirst} and \eqref{eq:EKTsec} are asymptotic for small $\alpha$. 

For $\beta_{SN}<\beta<\beta_{PF}$ the second escape is associated with following the unstable manifold from one of the two saddles that exist in the region.
In this case, the second escape does not pass through a second potential well of \eqref{eq:2biV} and so the escape times $\tau_2^{2|1}$ are no longer exponentially distributed beyond this point. For example, if we assume $R_1$ escapes before $R_2$ over the saddle $x$ then we can numerically estimate $T^{2|1}_2$ by parametrizing the unstable manifold $W^u(x)$ by $(r_1(t),r_2(t))$ between $R_1=\xi$ and $R_2=\xi$,
\begin{equation}
\widetilde{T}_2^{2|1} = \inf\{t~:~r_1(s)=\xi=r_2(t+s)~\mbox{ for some }s\}.
\label{eq:T21approx1}
\end{equation}
Specifically, we compute orbit segments that lie on the unstable manifold $W^u(x)$ as solutions of a multi-segment boundary value problem set-up in the software package {\sc{Auto}} \cite{AutoOrig,auto}; for general theory of computation of manifolds with AUTO see \cite{doedel07lec,kraus07comp}. We rescale the deterministic part of equations \eqref{eq:2r} so that the integration time becomes a parameter of the system. We fix the parameters $\nu=0.2$, $\alpha=0.05$, $\phi=0$ and $\beta=0.0155$, very close to but just past the saddle node bifurcation. We  compute one orbit segment that has one end in the linear unstable direction of $x$ and the other at the threshold $R_1=\xi$. We then compute a second orbit segment that has one end equal to the end of the first segment and the other end of the orbit segment lies on the threshold $R_2=\xi$. We continue the system with $\beta$ as the main increasing continuation parameter up to $\beta_{PF}$ whilst monitoring the integration time of the second orbit segment. Here we use variable step size $10^{-6}<h<1$ and $\text{\sc{ntst}}=300$ mesh points. 

For $\beta_{PF}<\beta$ the second escape is associated with fluctuations away from synchrony near the synchronous unstable manifold of the single saddle. In this case we can estimate the scaling with $\beta$ by considering these fluctuations.
For some constant $\delta$ and threshold $\xi$, we approximate the dynamics through the region
$$
R_1=R+\delta,R_2=R-\delta.
$$
For trajectories passing by $R=\xi$, $\delta=0$ we have for $\nu=0.2$ and $\alpha=0.05$
\begin{align*}
\dot{R} &\approx 0.12125\\
\dot{\delta}&\approx - L \delta + \alpha dW_t
\end{align*}
Where the value of $\dot{R}$ is given at $R=\xi=0.5$ and $\delta=0$, and $L=(2\beta-0.329834)$ is the transverse eigenvalue at the saddle; note, $L=0$ at the PF bifurcation.
If we assume that the process for $\delta$ is in equilibrium then $\delta\sim N(0,\alpha^2/(2L))$.
This means the first escape will happen approximately at time $T$ before $R=\xi$ such that $0.12125 T \approx \alpha/(\sqrt{2L})$. The second escape will happen at a time roughly the same time after $R=\xi$, meaning we have an estimate
\begin{equation}
\widetilde{T}_2^{2|1} \sim 2T = (\alpha/\Delta) \sqrt{2/L}= (0.05/0.12125)\sqrt{2/(2\beta-0.329834)}
\label{eq:T21approx2}
\end{equation}

\begin{figure}[t] 
\includegraphics[width=\textwidth]{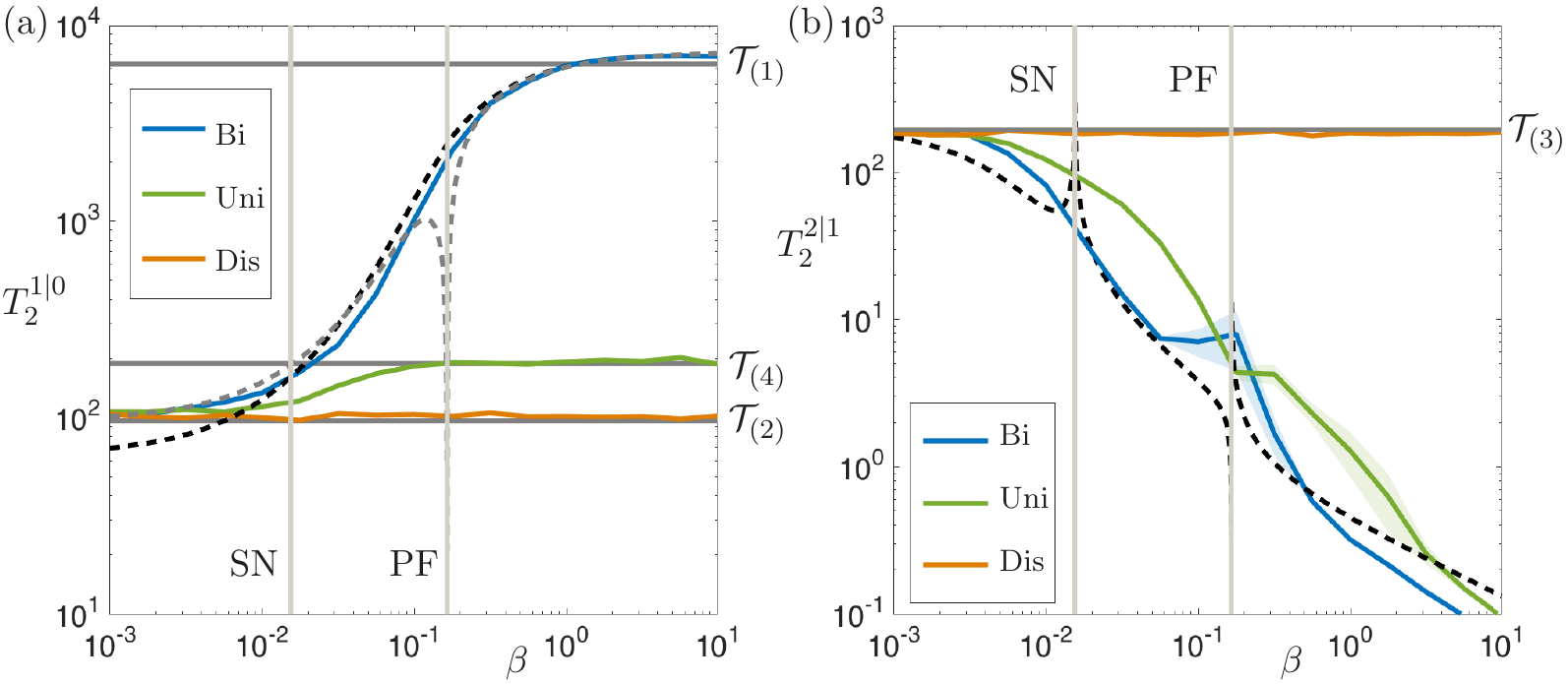}
\caption{Numerically computed mean first passage times $T_2^{1|0}$ (a) and $T_2^{2|1}$ (b) against $\beta$ for two nodes with bidirectional coupling (Bi), unidirectional coupling (Uni) and disconnected (Dis) for $\nu=0.2$, $\alpha=0.05$ and threshold $\xi=0.5$. The grey lines indicate the saddle-node  (SN) and the pitchfork  (PF) bifurcations in the case of bidirectional coupling. Panel (a) shows mean first passage times $T_2^{1|0}$  (cf.~ \cite[Figure~6(b)]{benj12pheno}) with $A\widehat{T}_2^{1|0}+B$ (grey dashed) where $\widehat{T}_2^{1|0}$ is given by \eqref{eq:EKfirst} and $A\widetilde{T}_2^{1|0}+B$ (black dashed) where $\widetilde{T}_2^{1|0}$ is given by \eqref{eq:EKTfirst}. Here $A=4.38$ and $B=-295$. The predicted asymptotic escape times ${\cal{T}}_{(1)},{\cal{T}}_{(2)}$ and ${\cal{T}}_{(4)}$ for each network in the limit $\beta\rightarrow \infty$ are shown. Note that in the limit $\beta\rightarrow 0$, all times limit to ${\cal{T}}_{(2)}$. Panel (b) also shows $A\widehat{T}_2^{2|1}+B$ (black dashed)  where $\widehat{T}_2^{2|1}$ is given by $\eqref{eq:EKTsec}$ and $\widetilde{T}_2^{2|1}$  (black dashed) given by \eqref{eq:T21approx1}--\eqref{eq:T21approx2}. The predicted asymptotic escape time ${\cal{T}}^{(3)}$ is also plotted and all times limit to ${\cal{T}}_{(3)}$ for $\beta \to 0$.  The error bars for $T_2^{2|1}$ for the bidirectional and unidirectional coupling become much larger around  $\beta_{PF}$ due to the small escape times.
}
\label{fig:2escT}
\end{figure}

Figure~\ref{fig:2escT} shows numerical simulations of $T_2^{1|0}$ and $T_2^{2|1}$ plotted with error bars against $\beta$ for two nodes with bidirectional coupling, unidirectional coupling and disconnected. 
We mark  the estimated limits ${\cal{T}}_{(i)}$ for $i=1...4$ given by \eqref{eq:lim1}--\eqref{eq:lim4} computed for $\nu=0.2$, $\alpha=0.05$ and $\xi=0.5$ by numerical integration of \eqref{eq:meanT}. We mark the location of the saddle-node and pitchfork bifurcations in each panel. Panel (a) shows $A\widehat{T}_2^{1|0}+B$ and $A\widetilde{T}_2^{1|0}+B$ where $\widehat{T}_2^{1|0}$ and $\widetilde{T}_2^{1|0}$ are the classic and modified multidimensional Eyring-Kramers times given by \eqref{eq:EKfirst} and \eqref{eq:EKTfirst} respectively. Panel(b) shows $A\widehat{T}_2^{2|1}+B$ where  $\widehat{T}_2^{2|1}$ is the multidimensional Eyring-Kramers times given by \eqref{eq:EKTsec}, and the additional estimates  $\widetilde{T}_2^{2|1}$ given by \eqref{eq:T21approx1} and \eqref{eq:T21approx2} are also shown. 

In both panels the estimated limits agree well with those found numerically for the full system. In panel (a) the approximation $A\widehat{T}_2^{1|0}+B$ has a clear asymptote at the pitchfork bifurcation, whereas, the two curves $A\widetilde{T}_2^{1|0}+B$ meet at $\beta_{PF}$ and note that the general shape is consistent with the numerically computed $T_2^{1|0}$.  Panel (b) shows $A\widehat{T}_2^{2|1}+B$ is close to $T_2^{2|1}$ for $\beta=10^{-3}$ but the two curves diverge rapidly as $\beta \to \beta_{SN}$. Approximations $\widetilde{T}_2^{2|1}$ also diverge from $T_2^{2|1}$ at the bifurcations, but follow the general shape of $T_2^{2|1}$ for $\beta_{SN} << \beta<<\beta_{PF}$ and $\beta>\beta_{PF}$. The accuracy of our numerical simulations also breaks down for times around $10^{1}$ as can be seen from the large error bars around the bidirectional and unidirectional curves. This is due to small escape times and the fixed step size used in our computations.

\section{A master equation approach to sequential escape}
\label{sec:master}

In this section we use a Master equation approach to model sequential escape on a network of $N$ nodes. We assume that each node can undergo a transition from quiescent state $0$ to active state $1$ as a memoryless Markov jump process, and as above we assume there are no transitions from $1$ back to $0$ meaning the associated Markov chain is transient to an absorbing state. Using this, at least in the weak coupling limit, we find good agreement not only to the mean sequential escape times but also to their distributions.

\subsection{Sequential escapes for weak coupling}

We characterise the state of a system such as \eqref{eq:neteq} at time $t$ by the vector
$$
X(t)=\{X_i\}\in \{0,1\}^N.
$$
where node $X_j$ changes from $0$ to $1$ according to a memoryless process with rate $r^{X}_j$: this rate may depend on current state $X$. For a fixed threshold $\xi>0$ one may think of $X_i(t)$ as the discrete random variable that changes from zero to one at the first escape time $\tau^{(i)}$ of node $i$, i.e.
$$
X_i(t)=\left\{ \begin{array}{cl}
0 & \mbox{ if }0\leq t<\tau^{(i)}\\
1 & \mbox{ otherwise}
\end{array}\right.
$$

The set of such states forms the vertices of an $N$-dimensional hypercube and there is a directed edge if a transition is possible from one state to the other. The irreversible nature of the process means there is a directed cycle-free sequence of transitions on $\{0,1\}^N$ that leads from $X(0)=\{0\}^N$ to $X = \{1\}^N$. 
Figure~\ref{fig:hyper23} shows possible states of (a) two node and (b) three node networks as vertices of two- and three-dimensional hypercubes, respectively.
The directed edges are labelled with their corresponding transition rates; for example, in panel (a) the rate $r_2^{\{0,0\}}$ is the transition rate of node 2 given that we are in state $X=[0,0]$.

\begin{figure}
\centering
\includegraphics[width=13.5cm]{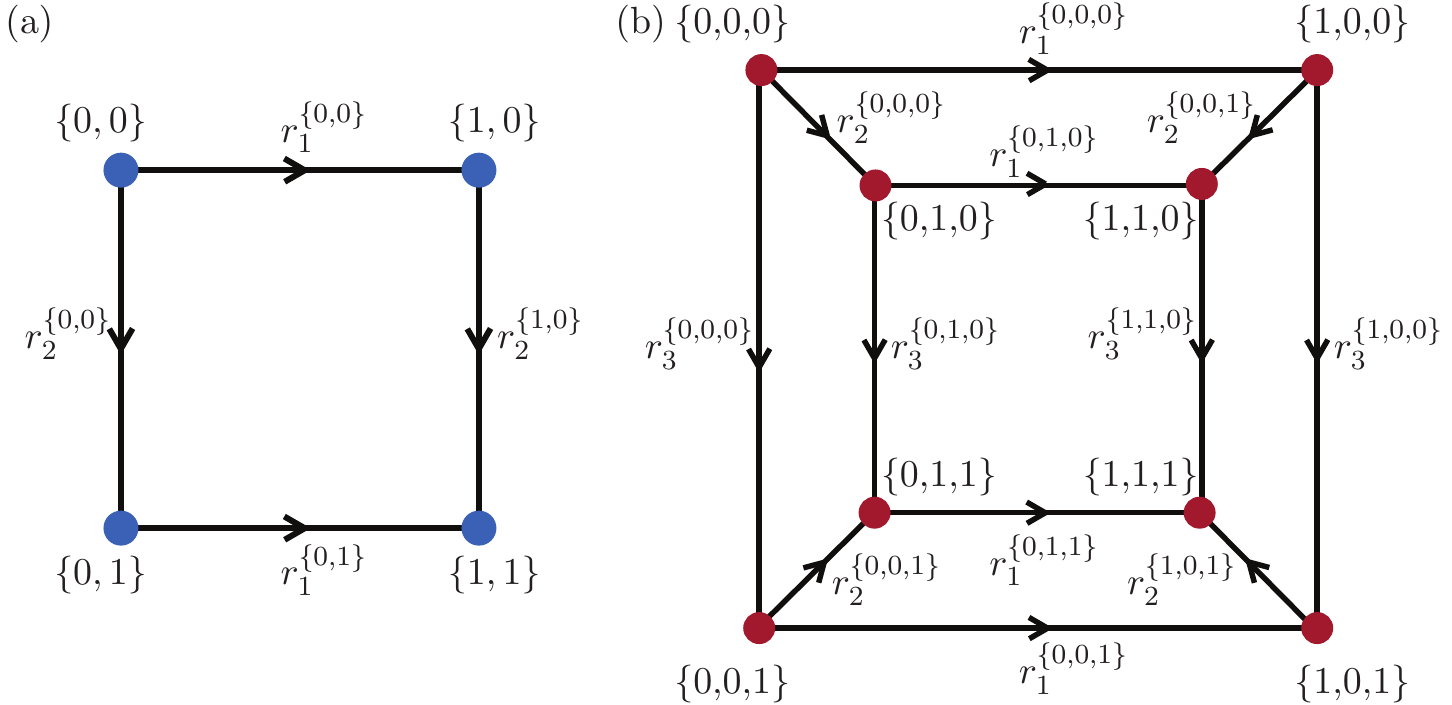}
\caption{Hypercube of states for a network of (a) two nodes and (b) three nodes. Each vertex of the hypercube is a state $X$ of the network and the directed edges indicate permissible transitions between states at state dependent transition rates $r_j^X$. }
\label{fig:hyper23}
\end{figure}

Let $p_{X}(t)$ represent the probability that the system is in state $X$ at time $t>0$: we study $p_{X}(t)$ using a master equation approach. Define the {\em origin} operator $O_j:\{0,1\}^N\rightarrow \{0,1\}^N$ by
\begin{equation*}
[O_j(X)]_k:=\left\{ \begin{array}{rl}
X_k & \mbox{ if }k\neq j\\
0 & \mbox{ if }k=j.
\end{array}
\right.
\end{equation*}
The probability $p_{X}$ satisfies the master equation:
\begin{equation}
\frac{d}{dt} p_{X} = \sum_{j~:~X_j=1} r^{O_j(X)} p_{O_j(X)}-  \sum_{j~:~X_j=0} r^{X}_j p_{X}
\end{equation}
for $X\in \{0,1\}^N$. The first sum represents the rate at which states arrive at state $X$ and the second sum represents the rate at which they leave state $X$. This gives a system of linear ordinary differential equations (ODE) that can be represented as
\begin{equation}
\frac{\dd}{\dd t} p = M p
\label{eq:meq}
\end{equation}
for the $2^N$ dimensional vector $p(t)$, where $M$ is a $2^N$ square matrix.  For example, the two node network shown in \Fref{fig:hyper23}(a) is governed by
\begin{equation*}
M =
\begin{bmatrix}
-r_1^{[0,0]} - r_2^{[0,0]} & 0 & 0 & 0 \\
r_1^{[0,0]} & -r_2^{[1,0]} & 0 & 0 \\
r_2^{[0,0]} & 0 & -r_1^{[0,1]} & 0\\
0 & r_2^{[1,0]} & r_1^{[0,1]} & 0
\end{bmatrix}
  \ \ {\rm{and }}   \ \ 
p = 
\begin{bmatrix}
p_{[0,0]}\\ p_{[1,0]}\\ p_{[0,1]}\\ p_{[1,1]}
\end{bmatrix}
\end{equation*}
with solution $p(t)= p(0) \exp Mt$. The eigenvalues of $M$ are $\lambda_1=-r_1^{[0,0]} - r_2^{[0,0]}$, $\lambda_2=- r_2^{[1,0]}$, $\lambda_3=- r_1^{[0,1]}$ and $\lambda_4=0$ with corresponding eigenvectors $v_i$. In particular, the unique zero eigenvalue $\lambda_4$ has eigenvector $v_4=[0, 0, 0, 1]^T$ showing that the state $X=[1,1]$ is an absorbing state for the system.

\subsection{The all-to-all coupled case}
\label{sec:evol}

We define the probability that $k$ out of $N$ nodes have escaped by time $t$ to be
\begin{equation}
p_{N,k} (t):= {\mathbb{P}}\{ |\{i~:~X_i(t)=1\}|=k\}.
\label{eq:pNkdef}
\end{equation}
Explicit formulae for $p_{N,k}$ can be found for the all-to-all coupled case, where the rate $r^X_j$ depends only on the number of escaped nodes, i.e. where
$$
r_{j}^X = r_{|\{i~:~X_i=1\}|}
$$
and the rate $r_i$ corresponds to the rate at which the remaining nodes escape, given that $i\in\{0,\ldots,N\}$ of them have already escaped (we use the convention that $r_N=0$). 

For example, for a two node network note that $p_{2,0}=p_{[0,0]}$, $p_{2,1} = p_{[0,1]} +  p_{[1,0]}$ and $p_{2,2}=p_{[1,1]}$.
In the uncoupled case with rate $r$ at each node, substituting in $N=2$ and $n=0,1,2$ we obtain
\begin{align*}
p_{2,0} & =  {\rm{e}}^{-2rt},\\ 
p_{2,1}&  =  {\rm{e}}^{-rt}(1- {\rm{e}}^{-rt}), \\ 
p_{2,2}& = -2  {\rm{e}}^{-rt}+ {\rm{e}}^{-2rt}+1.
\end{align*}

More generally, for the all-to-all coupled case the $2^N$ equations of \eqref{eq:meq} for the $p_{X}$ can be reduced to $N+1$ equations for the $p_{N,k}$, where
$\sum_{k=0}^N p_{N,k}=1$. The resulting linear system has $N+1$ eigenvalues
\begin{equation}
 \lambda_k = -(N-k)r_k
 \label{eq:geneigris}
\end{equation}
for $k=0, \dots, N$. As the non-zero off diagonal entries are $-\lambda_k$, the $k^{\rm{th}}$ equation is
\begin{equation}
\frac{\dd }{\dd t} p_{N,k} =- \lambda_{k-1}p_{N,k-1} + \lambda_k p_{N,k}.
\label{eq:genris}
\end{equation}

\begin{prop}
Assume that $\lambda_k<0$ for $k=1,\ldots,N-1$ are distinct and all nodes are quiescent at time $t=0$, i.e. $p_{N,0}(0)=1$. Then the solution of \eqref{eq:genris} is given by
\begin{equation}
p_{N,k} (t) = \left[\prod_{i=0}^{k-1} \lambda_i\right] \left[ \sum_{j=0}^k \frac{{\rm{e}}^{\lambda_j t}}{\prod_{n \neq j} (\lambda_n -\lambda_j)} \right].
\label{eq:PNk}
\end{equation}
\label{prop:master}
\end{prop}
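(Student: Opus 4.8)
The plan is to exploit the lower-bidiagonal structure of \eqref{eq:genris}: with the convention $\lambda_{-1}=0$ and $p_{N,-1}\equiv 0$, each $p_{N,k}$ is forced only by $p_{N,k-1}$, so the system is a finite chain of first-order linear ODEs that can be integrated one component at a time. I would proceed by induction on $k$. The base case $k=0$ reads $\dot p_{N,0}=\lambda_0 p_{N,0}$ with $p_{N,0}(0)=1$, giving $p_{N,0}(t)=e^{\lambda_0 t}$; this agrees with \eqref{eq:PNk}, where the prefactor is the empty product $1$ and the inner sum contributes its single $j=0$ term (again an empty product in the denominator).

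For the inductive step I would write \eqref{eq:genris} as $\dot p_{N,k}-\lambda_k p_{N,k}=-\lambda_{k-1}p_{N,k-1}$, multiply by the integrating factor $e^{-\lambda_k t}$, insert the inductive formula for $p_{N,k-1}$, and integrate from $0$ to $t$ using $p_{N,k}(0)=0$. Since $p_{N,k-1}$ is a combination of exponentials $e^{\lambda_j t}$ with $j\le k-1<k$, each integral is elementary and yields a term proportional to $(e^{\lambda_j t}-e^{\lambda_k t})/(\lambda_k-\lambda_j)$; distinctness of the eigenvalues is exactly what guarantees $\lambda_k\neq\lambda_j$ here. Writing $P_{k,j}:=\big[\prod_{n\neq j}(\lambda_n-\lambda_j)\big]^{-1}$ with the product over $0\le n\le k$, one finds that for each $j<k$ the coefficient of $e^{\lambda_j t}$ collapses, via $P_{k-1,j}/(\lambda_k-\lambda_j)=P_{k,j}$, to precisely the coefficient demanded by \eqref{eq:PNk}, while the shared prefactor becomes $\prod_{i=0}^{k-1}\lambda_i$ as required.

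The one coefficient that is not immediate is that of $e^{\lambda_k t}$, which the integration gathers from all $j<k$ and which must equal $P_{k,k}$. Matching it reduces to the single identity $\sum_{j=0}^k P_{k,j}=0$ for $k\ge 1$, and this is the step I expect to be the real obstacle. I would dispatch it by interpolation: setting $L(x):=\prod_{n=0}^k(x-\lambda_n)$ one has $\prod_{n\neq j}(\lambda_n-\lambda_j)=(-1)^k L'(\lambda_j)$, so $\sum_j P_{k,j}=(-1)^k\sum_j 1/L'(\lambda_j)$, and $\sum_{j=0}^k 1/L'(\lambda_j)$ is the sum of the residues of $1/L$, which vanishes whenever $\deg L=k+1\ge 2$. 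This closes the induction.

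An equivalent and slightly shorter route is via the Laplace transform: the chain gives $\hat p_{N,k}(s)=\frac{\prod_{i=0}^{k-1}(-\lambda_i)}{\prod_{n=0}^k(s-\lambda_n)}$, and a partial-fraction decomposition over the distinct poles $\lambda_0,\dots,\lambda_k$ followed by term-by-term inversion produces \eqref{eq:PNk} at once. Here the distinctness hypothesis is precisely what licenses the simple-pole decomposition, and the same vanishing-residue identity is what reconciles the two forms. Either way, the only content beyond routine integration is that residue identity together with the sign bookkeeping relating $\prod_{n\neq j}(\lambda_n-\lambda_j)$ to $\prod_{n\neq j}(\lambda_j-\lambda_n)$.
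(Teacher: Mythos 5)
Your argument is correct and follows essentially the same route as the paper's proof: induction on $k$, an integrating factor $\mathrm{e}^{-\lambda_k t}$ applied to the bidiagonal chain, matching of the $\mathrm{e}^{\lambda_j t}$ coefficients for $j<k$ via $P_{k-1,j}/(\lambda_k-\lambda_j)=P_{k,j}$, and reduction of the remaining $\mathrm{e}^{\lambda_k t}$ coefficient to the identity $\sum_{j=0}^{k} \bigl[\prod_{n\neq j}(\lambda_n-\lambda_j)\bigr]^{-1}=0$, which is precisely the paper's \eqref{eq:identity}. The one genuine difference is how that identity is established: the paper deduces it from the Lagrange interpolating polynomial of the constant function $1$ (its leading coefficient, i.e.\ its $k$th derivative, must vanish), whereas you obtain it as the vanishing of the sum of residues of $1/L$ with $L(x)=\prod_{n=0}^{k}(x-\lambda_n)$ and $\deg L\geq 2$; these are two faces of the same fact, and your sign bookkeeping $\prod_{n\neq j}(\lambda_n-\lambda_j)=(-1)^{k}L'(\lambda_j)$ is correct. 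Your Laplace-transform alternative is also sound and arguably tidier: writing $\hat p_{N,k}(s)=\prod_{i=0}^{k-1}(-\lambda_i)\big/\prod_{n=0}^{k}(s-\lambda_n)$ makes the role of the distinctness hypothesis explicit (simple poles only) and absorbs the residue identity into the partial-fraction decomposition itself, at the cost of invoking transform machinery the paper avoids. Either version closes the induction; no gap.
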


\begin{proof}  
We show this by induction for any $N>0$ and all integers $0\leq k\leq N$. It is clear that $\frac{\dd}{\dd t} p_{N,0} = \lambda_0 p_{N,0}$ has the solution $p_{N,0}={\rm{e}}^{\lambda_0 t}$, for any $N$.
It follows that $\frac{\dd}{\dd t} {p}_{N,1} = -\lambda_0 p_{N,0} + \lambda_1 p_{N,1}$ has the solution
$$
p_{N,1} = \frac{\lambda_0}{\lambda_1 - \lambda_0}{\rm{e}}^{\lambda_0 t} + \frac{ \lambda_0}{\lambda_0 - \lambda_1}{\rm{e}}^{\lambda_1 t},
$$
and $\frac{\dd}{\dd t} {p}_{N,2} = - \lambda_1 p_{N,1} + \lambda_2 p_{N,2}$ has the solution
$$
p_{N,2} = \frac{\lambda_0 \lambda_1}{(\lambda_0 - \lambda_1)(\lambda_0-\lambda_2)}{\rm{e}}^{\lambda_0 t} + \frac{\lambda_0 \lambda_1}{(\lambda_1 - \lambda_0)(\lambda_1 - \lambda_2)}{\rm{e}}^{\lambda_1 t}  + \frac{\lambda_0 \lambda_1}{(\lambda_2 - \lambda_0)(\lambda_2 - \lambda_1)}{\rm{e}}^{\lambda_2 t}.
$$
Now assume that the result holds for some $k<N$. Using \eqref{eq:genris} we write $\dot{p}_{N,k+1} (t) = -\lambda_k p_{N,k} + \lambda_{k+1} p_{N,k+1}.$
Using integration factor $\text{e}^{-\lambda_{k+1}t} $ and~\eqref{eq:PNk} gives
\begin{align*}
\frac{\text{d}}{\text{d}t}\left( p_{N,k+1} \text{e}^{-\lambda_{k+1}t}  \right) &= -\lambda_{k}p_{N,k}\text{e}^{-\lambda_{k+1}t},\\
&=-\prod_{i=0}^{k} \lambda_i \left[ \sum_{j=0}^k \frac{{\rm{e}}^{(\lambda_j -\lambda_{k+1})t}}{\prod_{n \neq j} (\lambda_n -\lambda_j)} \right].
\end{align*}
Integrating both sides with respect to $t$ we obtain
\begin{equation*}
p_{N,k+1}e^{-\lambda_{k+1}t}=-\prod_{i=0}^k \lambda_i \left[ \sum^{k}_{j=0}  \frac{{\rm{e}}^{(\lambda_j -\lambda_{k+1})t}}{(\lambda_j - \lambda_{k+1})\prod_{n \neq j} (\lambda_n -\lambda_j)} + C \right],
\end{equation*}
and so
\begin{equation*}
p_{N,k+1}=\prod_{i=0}^k \lambda_i \left[ \sum^{k}_{j=0} {\rm{e}}^{\lambda_{j}t}\prod_{n=0,n \neq j}^{k+1}  \frac{1}{(\lambda_n -\lambda_j)} + C{\rm{e}}^{\lambda_{k+1}t} \right],
\end{equation*}
Using the initial condition $X(0)=\{0\}^N$, equally $p_{N,k+1}(0) = 0 $, we find
\begin{align*}
C &= -\sum_{j=0}^k  \prod_{n=0,n \neq j}^{k+1}  \frac{1}{(\lambda_n -\lambda_j)}=\prod_{n=0}^{k}  \frac{1}{(\lambda_n -\lambda_{k+1})}
\end{align*}
which follows from the identity
\begin{equation}
\sum_{j=0}^{k+1} \prod_{n=0,n\neq j}^{k+1} \frac{1}{(\lambda_n-\lambda_j)}=0.
\label{eq:identity}
\end{equation}
Equation (\ref{eq:identity}) can be shown by considering the order $k$ Lagrange interpolating polynomial 
$$
P(x)=\sum_{j=0}^{k+1} \prod_{n=0,n\neq j}^{k+1} \frac{(x-\lambda_j)}{(\lambda_n-\lambda_j)}
$$ 
which is equal to $1$ at the $k+1$ points $x=\lambda_j$. The identity is obtained by noting that $P(x)$ is constant and hence $k!\frac{\dd^k}{\dd x^k} P(x)=0$. Therefore
\begin{align*}
p_{N,k+1}&=\prod_{i=0}^k \lambda_i \left[ \sum^{k}_{j=0} {\rm{e}}^{\lambda_{j}t}\prod_{n=0,n \neq j}^{k+1}  \frac{1}{(\lambda_n -\lambda_j)} + {\rm{e}}^{\lambda_{n+1}t}\prod_{n=0}^{k}  \frac{1}{(\lambda_n -\lambda_{k+1})}  \right],\\
&= \prod_{i=0}^{k} \lambda_i \left[ \sum_{j=0}^{k+1} \frac{{\rm{e}}^{\lambda_j t}}{\prod_{n \neq j} (\lambda_n -\lambda_j)} \right].
\end{align*}
Hence the statement is true for $k+1$: the result follows by induction.
\end{proof}

If there is an $i\neq j$ such that $\lambda_i=\lambda_j$ then the linear system \eqref{eq:genris} has a resonance and the explicit form of solution \eqref{eq:PNk} will be modified. We do not consider this here except to note that in the uncoupled case $r_j=r>0$ and \eqref{eq:geneigris} means there are no resonances. Assuming the Kramers rate determines the escapes, and this varies continuously in $\beta$, means that the no resonance condition is expected to apply for weak enough coupling. 

As an example, two nodes with bidirectional coupling gives
$r_0=r_1^{[0,0]} = r_2^{[0,0]}$ and $r_1 = r_1^{[0,1]} = r_2^{[1,0]}$
 so (\ref{eq:genris}) reduces to
$\dot{p}_{2,0}  = -2 r_0 p_{2.0}$,
$\dot{p}_{2,1} = 2 r_0 p_{2,0} - r_1p_{2,1}$ and
$\dot{p}_{2,2}  = r_1p_{2,1}$. The eigenvalues of the linear system are $\lambda_0=-2r_0$, $\lambda_1=-r_1$, and $\lambda_2=0$. Hence by \eqref{eq:PNk} we have
\begin{align}
p_{2,0} & =  {\rm{e}}^{\lambda_0 t}, \nonumber\\ 
p_{2,1} &=  \frac{\lambda_0}{\lambda_0-\lambda_1} \left( {\rm{e}}^{\lambda_1 t} - {\rm{e}}^{\lambda_0 t}\right),\label{eq:p202122} \\ 
p_{2,2} & =  \frac{\lambda_0 \lambda_1}{\lambda_0-\lambda_1} \left( \frac{{\rm{e}}^{\lambda_0 t}}{\lambda_0} - \frac{{\rm{e}}^{\lambda_1 t}}{\lambda_1}\right) + 1.\nonumber
\end{align}

Note that the reduction to a closed master equation with $N+1$ variables is only possible for the all-to-all coupled case where symmetry between nodes means that the order in which nodes escapes is identical on each sequence: the probability of a particular permutation $s(i)$ satisfying \eqref{eq:defines} is $1/N!$. 

\subsection{Estimation of sequential escape times}
\label{sec:estmaster}

Since $p_{N,k}(t)$ is the probability that precisely $k$ nodes have escaped, the solutions $p_{N,k}(t)$ can be used to determine the mean escape times. If we associate escape times to the $X_i$ by 
$$
\tau^{k}= \inf\{t>0~:~|\{i~:~X_i(t)=1\}|=k\},
$$
note that \eqref{eq:pNkdef} can be expressed as
$$
p_{N,k} (t)= {\mathbb{P}}\{ \tau^{k}\leq t<\tau^{k+1}\}.
$$
and
\begin{equation}
q_{N,k}(t) := {\mathbb{P}}\{\tau^{k} \leq t\} = \sum_{\ell=k}^N p_{N,\ell}.
\label{eq:mastcum}
\end{equation}
The mean time to the $k$th escape ($k>0$) is the expectation of $\tau^k$. For example, using \eqref{eq:p202122} for $N=2$ we have that $q_{2,1}= 1-p_{2,0}=1-e^{\lambda_0 t}$ and $q_{2,2}=p_{2,2}=1+(\lambda_1e^{\lambda_0t}+\lambda_0e^{\lambda_1 t})/(\lambda_0+\lambda_1)$, so that
$$
T_2^{1|0}=\int_{t=0}^{\infty} t \frac{d}{dt}[q_{2,1}(t)] \,dt = \frac{1}{|\lambda_0|}=\frac{1}{2r_0}
$$
while $T_2^{2|0}=1/|\lambda_1|+1/|\lambda_0|=1/r_1+1/(2r_0)$ for the bidirectionally-coupled two-node network considered in \sref{sec:2binet}.

The cumulative distribution \eqref{eq:cdfpt} can be approximated for any $N\geq k>\ell\geq 0$ by considering \eqref{eq:genris} with initial conditions $p_{N,\ell}(0)=1$ rather than $p_{N,0}(0)=1$. For this case we have
$$
\widehat{Q}_N^{k|\ell}(t) = \mathbb{P}\{\tau^k \leq t\}=\sum_{j\geq k} p_{N,j}.
$$
Note that
\begin{equation}
\widehat{Q}_N^{k|0}(t) = q_{N,k}(t).
\label{eq:Qk0}
\end{equation}
In the case $N=2$ we obtain 
\begin{equation}
\widehat{Q}_2^{2|1}(t)= 1-e^{\lambda_1 t}
\label{eq:Q221}
\end{equation}
and so $T_2^{2|1}=1/|\lambda_1|=1/r_1$.

The master equation approach presented above is only expected to be valid in the weak coupling regime, i.e. $\beta<\beta_{SN}$. For $\beta>\beta_{SN}$ the hypercube representation of states of the network is no longer valid; see \cite{BFG2007a}.
We use the simulations discussed in \sref{sec:2binet} for $\beta=10^{-2}$ with $\nu=0.2$, $\alpha=0.05$ and threshold $\xi=0.5$ as $ Q_N^{1|0}(t)$, $ Q_N^{2|0}(t)$ and $ Q_N^{2|1}(t)$.
We take the mean escape times $T_2^{1|0}(0.01) =133.5$ and $T_2^{2|1}(0.01)=80.94$ and compute $r_0 = 1/2T_2^{1|0}(0.01) \approx 0.00375$ and $r_1 = 1/ T_2^{2|1}(0.01) \approx 0.0124$.  We substitute these values into \eqref{eq:Qk0} for $k=1,2$ and $N=2$,  and \eqref{eq:Q221}.   \Fref{fig:simmeq} shows $ Q_2^{1|0}(t)$, $ Q_2^{2|0}(t)$ and $ Q_2^{2|1}(t)$ for $\beta=0.01$ plotted on one graph against time (linear) with $ \widehat{Q}_2^{1|0}(t)$, $ \widehat{Q}_2^{2|0}(t)$ and $ \widehat{Q}_2^{2|1}(t)$. 

\begin{figure}
\centering
\includegraphics[width=0.9\textwidth]{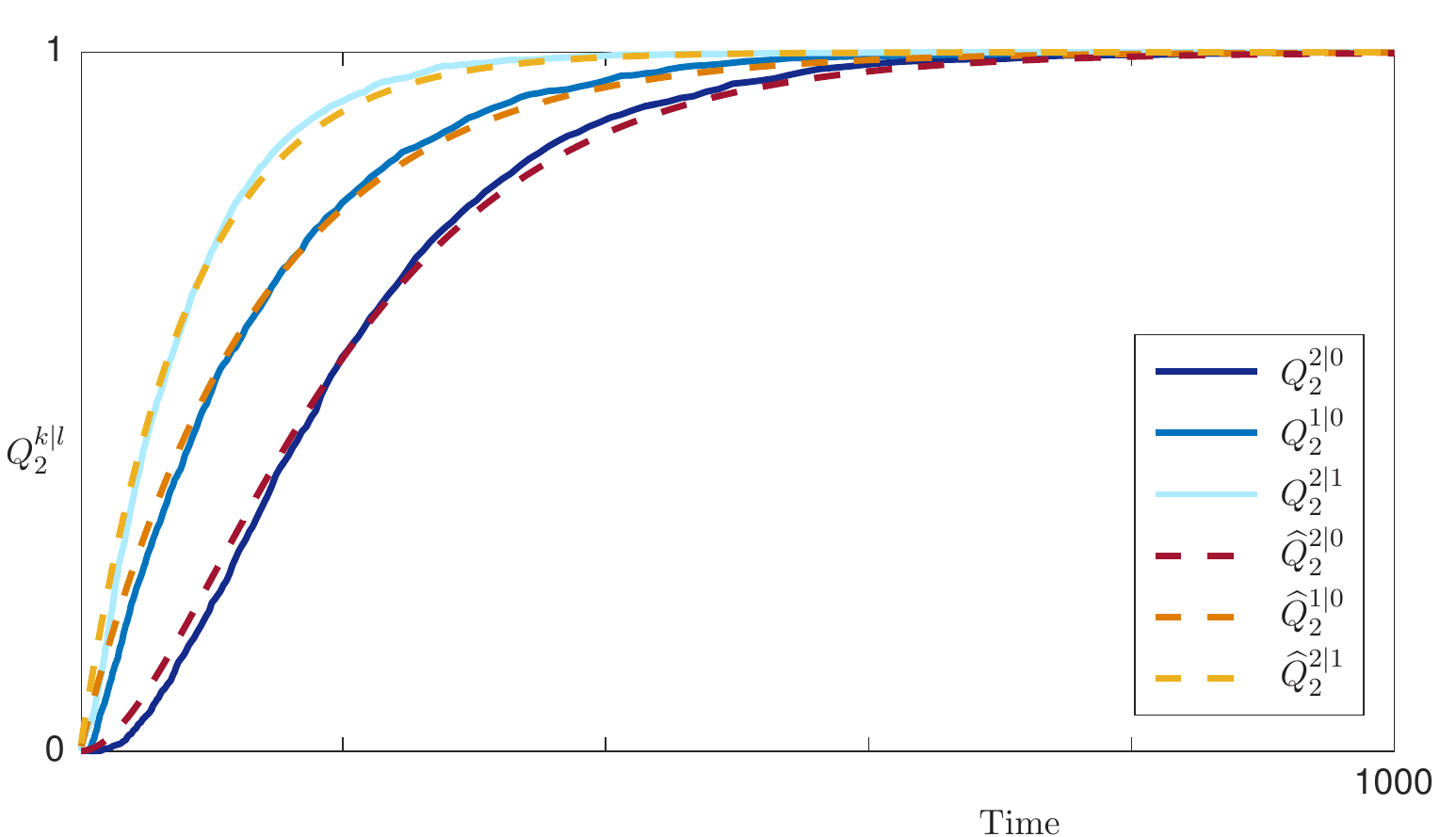}
\caption{Cumulative distributions $ Q_N^{1|0}(t)$, $ Q_N^{2|0}(t)$ and $ Q_N^{2|1}(t)$ (blues) for $\beta=0.01$, plotted with the master equation cumulative distributions $\widehat{Q}_N^{k,\ell}$ for the predicted values $r_0\approx 0.00375$ and $r_1\approx 0.0124$ (oranges).}
\label{fig:simmeq}
\end{figure}

\Fref{fig:simmeq} shows a good agreement between the simulations and the cumulative distributions obtained with the master equation approach in the weak coupling regime. For intermediate and strong coupling regimes, as illustrated for example in Figure~\ref{fig:2bibif}, the assumptions behind the master equation are no longer valid: the transition distributions may be far from well-modelled by exponential. This is beyond the scope of this paper and we leave it to future work.

\section{Discussion}
\label{sec:discuss}

In this paper we explained a number of features of the escape times discussed in \cite{benj12pheno}.
We gave improved estimates for the one node mean escape time showing the dependence on excitability $\nu$ and noise amplitude $\alpha$. We investigated sequential escapes for a system of identical bistable nodes and the dependence on coupling strength. To this end, we used the example of two bidirectionally connected nodes. We derived expressions for infinite coupling and uncoupled limits using numerical integration of the mean escape time of one node. Moreover, we gave asymptotic approximations for the first and second escape times in each of the strong, intermediate and weak coupling regimes. Here we made use of variations of the multi-dimensional Eyring-Kramers' Law~\cite{berg08EK, berg13kramers} to explain the escape times through a pitchfork bifurcation. We compared these estimates to numerical simulations. 

One of the more surprising results of this paper is that for the model system  \eqref{eq:neteq} with fixed excitability $\nu$ and noise level $\alpha$, an increase in $\beta$ has an opposite effect on the first and second mean escape times: Figure~\ref{fig:2escT} demonstrates $T_2^{1|0}$ monotonically increases for small $\beta$ while $T_2^{2|1}$ monotonically decreases. The aggregate effect is that the mean time to complete escape $T_{2}^{2|0}$ decreases and then increases. Essentially this is due to (a) strong coupling causing synchronization of the escapes but at the same time weakening the effect of noise on the system - it takes longer to escape because of the noise in the strongly coupled system is averaged. It is also (b) nonlinear effects of the interaction of bifurcations on the basin boundary in the coupled system with most likely escape paths.

In the weak coupling regime states of the network lie at the vertices of a hypercube. We used a master equation approach to find an expression for the probability of being in a given network state in the weakly all-to-all coupled case where the rate of escape from states with $k$ active nodes are equal.
We find good agreement between our numerical simulations and the distribution of escape times from a master equation model. On the other hand, the synchronization of escapes means that the master equation model breaks down above some critical coupling strength. 

As noted in \cite{act17domino}, sequential escape statistics should be of interest to characterising and modelling a wide range of processes in applications: in this work we extend these ideas to a simple case where there is bistability between an equilibrium and a limit cycle attractor. However, the simple nature of the bistable nodes we considered means that the phases effectively uncouple from the radial dynamics. For more general bistable oscillators the phase dynamics will not be so easily reduced. At least in the weak noise limit, it should be possible to develop master equation models suitable for intermediate and strong coupling. 

There are many open problems: not just generalization to more heterogeneous networks, but also non-potential systems. Eyring-Kramers' Law~\cite{berg13kramers} and the generalisation used here for non-quadratic saddles~\cite{berg08EK} require an explicit expression for the potential landscape of the system. However, the system describing two nodes with unidirectional coupling is not a potential system and the results do not hold for this case.  Analysis of escape times of non-potentail systems could be applied to, for example, energy landscapes derived from neuroimaging data~\cite{ezaki17energy, tka14search} and the bifurcations on the basin boundaries of these system could provide valuable insight into the emergent transient dynamics.

\section*{Acknowledgements}

We particularly thank the following people for their advice and perceptive suggestions: Oscar Benjamin, Chris Bick, Vadim Biktashev, Jan Sieber, John Terry, Kyle Wedgewood. We thank Robin Chapman for the proof of the identity used in Proposition~\ref{prop:master}.


\end{document}